\renewcommand{\theequation}{\thesection.\arabic{equation}}
\newtheorem{thm}{Theorem}[section]
\newtheorem{lemma}{Lemma}
\newtheorem{corollary}{Corollary}
\theoremstyle{definition}
\begin{document}

\numberwithin{equation}{section}

\title{\large\bf Identifying the Most Appropriate Order for Categorical Responses}
\author{Tianmeng Wang and Jie Yang\\
		University of Illinois at Chicago}
	
\maketitle
	
\begin{abstract}
Categorical responses arise naturally within various scientific disciplines. In many circumstances, there is no predetermined order for the response categories, and the response has to be modeled as nominal. In this study, we regard the order of response categories as part of the statistical model, and show that the true order, when it exists, can be selected using likelihood-based model selection criteria. For predictive purposes, a statistical model with a chosen order may outperform models based on nominal responses, even if a true order does not exist. For multinomial logistic models, widely used for categorical responses, we show the existence of theoretically equivalent orders that cannot be differentiated based on likelihood criteria, and determine the connections between their maximum likelihood estimators. We use simulation studies and a real-data analysis to confirm the need and benefits of choosing the most appropriate order for categorical responses.
\end{abstract}
	
{\it Key words and phrases:}
AIC, BIC, categorical data analysis, model selection, multinomial logistic model

\section{Introduction}\label{sec:introduction}

Categorical responses, in which the measurement scale consists of a set of categories, arise naturally in many scientific disciplines. Examples include the social sciences for measuring attitudes and opinions, health sciences for measuring responses to a medical treatment, behavioral sciences for diagnosing mental illness, ecology for determining primary land use in satellite images, education for measuring student responses, marketing for  determining consumer preferences, among many others \citep{agresti2018introduction}. When the response is binary, generalized linear models are widely used \citep{pmcc1989, dobson2018}. When responses have three or more categories, multinomial logistic models are popular \citep{pmcc1995, atkinson1999, bu2020}, and include four kinds of logit models:  {\it baseline-category}, {\it cumulative}, {\it adjacent-categories}, and {\it continuation-ratio} logit models. 

Following the notation of \cite{bu2020}, we consider summarized data in the form of $\{({\mathbf x}_i, {\mathbf Y}_i), i=1, \ldots, m\}$ from an experiment or observational study with $d\geq 1$ covariates and $J\geq 3$ response categories, where ${\mathbf x}_i = (x_{i1}, \ldots, x_{id})^T$, for $i=1, \ldots, m$, are distinct level combinations of the $d$ covariates, and ${\mathbf Y}_i=(Y_{i1},\cdots,Y_{iJ})^T$, with $Y_{ij}$ indicating the number of original observations associated with the covariates ${\mathbf x}_i$ and the $j$th response category, for $j=1, \ldots, J$. A multinomial logistic model assumes ${\mathbf Y}_i \sim {\rm Multinomial}(n_i; \pi_{i1},\cdots,\pi_{iJ})$ independently, with $n_i = \sum_{j=1}^J Y_{ij} > 0$ and positive categorical probabilities $\pi_{ij}$ associated with ${\mathbf x}_i$, for $i=1, \ldots, m$.

For {\it nominal} responses, that is, the response categories do not have a natural ordering \citep{agresti2013categorical}, baseline-category logit models, also known as multiclass logistic regression models, are commonly used. Following \cite{bu2020}, the baseline-category logit model with partial proportional odds ({\it ppo}) can be described in general as
\begin{equation}\label{eq:model_baseline}
    	\log\left(\frac{\pi_{ij}}{\pi_{iJ}}\right) = \eta_{ij} = {\mathbf h}_j^T({\mathbf x}_i)\boldsymbol\beta_j+{\mathbf h}_c^T({\mathbf x}_i)\boldsymbol\zeta \ ,
\end{equation}
where ${\mathbf h}_j^T(\cdot) = (h_{j1}(\cdot), \ldots, h_{jp_j}(\cdot))$ and ${\mathbf h}_c^T(\cdot) = (h_{1}(\cdot), \ldots, h_{p_c}(\cdot))$ are known predictor functions, and $\boldsymbol\beta_j = (\beta_{j1}, \ldots, \beta_{jp_j})^T$ and $\boldsymbol\zeta = (\zeta_{1}, \ldots, \zeta_{p_c})^T$ are unknown regression parameters, for $i=1, \ldots, m$, $j=1, \ldots, J-1$. As special cases, ${\mathbf h}_j^T({\mathbf x}_i) \equiv 1$ leads to a proportional odds ({\it po}) model that assumes the same parameters for all categories, except the intercepts, and ${\mathbf h}_c^T({\mathbf x}_i) \equiv 0$ leads to a nonproportional odds ({\it npo}) model that allows all parameters to change across categories. For additional explanations and examples about ppo, po, and npo models, see \cite{bu2020}.  

In model~\eqref{eq:model_baseline}, the $J$th category is treated as the baseline category. It is well known that the choice of baseline category does not matter, because the resulting models are equivalent (e.g., see Section~4.4 in \cite{hastie2009elements}). However, the equivalence of choices of baseline categories is true only for npo models. As we show in Section~\ref{sec:baseline_category}, for po or general ppo models, those with different baseline categories are not equivalent, and thus the baseline category should be chosen carefully.

The other three logit models assume that the response categories have a natural ordering or a hierarchical structure, and are known as {\it ordinal} or {\it hierarchical} models, respectively. Following \cite{bu2020}, these three logit models with ppo can be written as follows:
\begin{equation}\label{eq:cumulative_model}
  \log\left(\frac{\pi_{i1}+\cdots + \pi_{ij}}{\pi_{i,j+1}+\cdots + \pi_{iJ}}\right) = \eta_{ij} = {\mathbf h}_j^T({\mathbf x}_i)\boldsymbol\beta_j+{\mathbf h}_c^T({\mathbf x}_i)\boldsymbol\zeta~, \>\mbox{cumulative} 
\end{equation}
\begin{equation}\label{eq:adjacent_model}
  \log\left(\frac{\pi_{ij}}{\pi_{i,j+1}}\right) = \eta_{ij} = {\mathbf h}_j^T({\mathbf x}_i)\boldsymbol\beta_j+{\mathbf h}_c^T({\mathbf x}_i)\boldsymbol\zeta~, \>\mbox{adjacent categories}
\end{equation}
\begin{equation}\label{eq:continuation_model}
	\log\left(\frac{\pi_{ij}}{\pi_{i,j+1} + \cdots + \pi_{iJ}}\right) = \eta_{ij} = {\mathbf h}_j^T({\mathbf x}_i)\boldsymbol\beta_j+{\mathbf h}_c^T({\mathbf x}_i)\boldsymbol\zeta~, \>\mbox{continuation ratio.} 
\end{equation}
These are special cases of the multivariate generalized linear models \citep{mccullagh1980regression} or multivariate logistic models \citep{pmcc1995}.

Note that cumulative logit models have been extended to cumulative link models and ordinal regression models \citep{mccullagh1980regression, agresti2013categorical, ytm2016}. 
A baseline-category logit model can be modified with a probit link, and is known as a multinomial probit model \citep{aitchison1970, agresti2013categorical, greene2018econometric}. Furthermore, the continuation-ratio logit model can be changed with a complementary log-log link \citep{oconnell2006} for data analysis. We focus on multinomial logistic models, because the logit link is the most commonly used.

For some applications, the ordering of the response categories is clear. For example, trauma data \citep{chuang1997, agresti2010analysis, bu2020} includes $J=5$ ordinal response categories, namely, {\tt death}, {\tt vegetative} {\tt state}, {\tt major} {\tt disability}, {\tt minor} {\tt disability}, and {\tt good} {\tt recovery}, known as the Glasgow Outcome Scale \citep{jennett1975}. A cumulative logit model with npo has been recommended for modeling such data \citep{bu2020}.

For some other applications, the ordering is either unknown or difficult to determine. As a motivating example, the police data described in Section~\ref{sec:real_data} contain covariates about individuals killed by the police in the United States for the period 2000 to 2016. 
The responses have four categories, namely, {\tt shot}, {\tt tasered}, {\tt shot and tasered}, and {\tt other}. To model the responses that are relevant to the police's actions on various covariates of the suspects, one strategy is to treat the response as nominal and use the baseline-category logit model~\eqref{eq:model_baseline}. Another strategy is to determine an appropriate order for the categories, and then to use one of the other three logit models~\eqref{eq:cumulative_model}, \eqref{eq:adjacent_model}, and \eqref{eq:continuation_model}. Our analysis in Section~\ref{sec:real_data} shows that a continuation-ratio npo model with a chosen order performs best, and that the second strategy is significantly better.

A critical question that needs answering is whether we can identify the true order of the response categories, when it exists. Our answer is summarized as follows. First, we will show in Section~\ref{sec:optimal_true_order} that if there is a true order with a true model, it will attain the maximum likelihood asymptotically, so that it can be identified using a likelihood-based model selection technique, such as the AIC or BIC. Second, depending on the type of logit model, some orders are indistinguishable or equivalent, because they attain the same maximum likelihood (see Table~\ref{tab:summary} for a summary of the equivalence among the orders identified in Section~\ref{sec:order_equivalence}). Third, depending on the range of covariates or predictors, some order that is not equivalent to the true one may approximate the maximum likelihood so well that it is not numerically distinguishable from the true order (see Section~\ref{sec:order_x_i}).

\begin{table}[ht]
\caption{\label{tab:summary}Equivalence among Orders of Response Categories}
\begin{center}
\begin{tabular}{ccc}\hline
     {\bf Logit model} & {\bf ppo or po} & {\bf npo}\\ \hline
     Baseline-category & Same if the baseline is unchanged  & All orders are the same \\
     & (Theorem~\ref{thm:baseline_ppo}) & (Theorem~\ref{thm:baseline_npo})\\ 
     Cumulative & Same as its reversed order  & Same as its reversed order \\
     & (Theorem~\ref{thm:cumulative_order}) & (Theorem~\ref{thm:cumulative_order}) \\ 
     Adjacent-categories & Same as its reversed order  & All orders are the same \\
     & (Theorem~\ref{thm:adjacent_ppo}) & (Theorem~\ref{thm:adjacent_npo}) \\  
     Continuation-ratio & All orders are distinguishable  & Same if switching last two \\
     & (Section~\ref{sec:simulation_trauma}) & (Theorem~\ref{thm:continuation_npo}) \\  \hline
    \end{tabular}
\end{center}
\end{table}

In practice, nevertheless, even there is no true order among the response categories, we can still use likelihood-based model selection techniques to choose a working order supported by the data. As such, an ordinal model based on the working order will outperform nominal models in terms of prediction accuracy (see Sections~\ref{sec:baseline_category} and \ref{sec:order_nominal}). We provide a real-data example in Section~\ref{sec:real_data} that shows how to reduce the prediction errors significantly based on the working order. Overall, we suggest that practitioners view identifying the most appropriate order of response categories as part of the model selection procedure.

\section{Equivalence among Orders of Response Categories}\label{sec:order_equivalence}

In this section, before investigating which order of the response categories is best, we first answer a more fundamental question that occurs when two different orders lead to the same maximum likelihood. In this case, if one uses the AIC or BIC to select the best model (e.g., see \cite{hastie2009elements} for a good review), these two orders are indistinguishable, or equivalent. Such a phenomenon has been observed for some baseline-category models~\citep{hastie2009elements}, and here we show that it exists fairly generally in other multinomial logistic models as well (see Table~\ref{tab:summary} for a summary).

Given the original data $\{({\mathbf x}_i, {\mathbf Y}_i), i=1, \ldots, m\}$, ${\mathbf Y}_i = (Y_{i1}, \ldots, Y_{iJ})^T$ consists of the counts of observations falling into the response categories in the original order or labels $\{1, \ldots, J\}$. If we consider a regression model with a different order $\{\sigma(1), \ldots, \sigma(J)\}$ of the response categories, where $\sigma: \{1, \ldots, J\} \rightarrow \{1, \ldots, J\}$ is a permutation, this is equivalent to fitting the model using the permuted data $\{({\mathbf x}_i, {\mathbf Y}_i^\sigma), i=1, \ldots, m\}$,  where ${\mathbf Y}_i^\sigma = (Y_{i\sigma(1)}, \ldots,$ $ Y_{i\sigma(J)})^T$. We denote ${\mathcal P}$ as the collection of all permutations on $\{1, \ldots, J\}$. Then, each permutation $\sigma \in {\mathcal P}$ represents an order of the response categories, also denoted by $\sigma$.

Now, we consider two orders or permutations $\sigma_1, \sigma_2 \in {\mathcal P}$. For $i=1,2$, we denote $l_i(\boldsymbol\theta)$ as the likelihood function with order $\sigma_i$ or $\{\sigma_i(1), \ldots, \sigma_i(J)\}$. We say that $\sigma_1$ and $\sigma_2$ are {\it equivalent}, denoted as $\sigma_1 \sim \sigma_2$, if $\max_{\boldsymbol\theta\in \boldsymbol\Theta} l_1(\boldsymbol\theta) = \max_{\boldsymbol\theta\in \boldsymbol\Theta} l_2 (\boldsymbol\theta)$, where $\boldsymbol\Theta$ is the parameter space, that is, the set of all feasible $\boldsymbol\theta$. It is straightforward that ``$\sim$'' is an {\it equivalence relation} among the permutations in ${\mathcal P}$. That is, $\sigma \sim \sigma$ for all $\sigma$; $\sigma_1\sim \sigma_2$ if and only if $\sigma_2 \sim \sigma_1$; and $\sigma_1 \sim \sigma_2$ and $\sigma_2 \sim \sigma_3$ imply $\sigma_1 \sim \sigma_3$ (e.g., see Section~1.4 in \cite{wallace1998groups}).

\subsection{Partial proportional odds (ppo) models}\label{sec:ppo_order}

According to \cite{bu2020} (see also \cite{pmcc1995, atkinson1999}), all four logit models~\eqref{eq:model_baseline}, \eqref{eq:cumulative_model}, \eqref{eq:adjacent_model}, and \eqref{eq:continuation_model} with ppo (that is, the most general case) can be rewritten in a unified form
\begin{equation}\label{eq:logitunifiedmodel}
{\mathbf C}^T\log({\mathbf L}{{\boldsymbol\pi}}_i)={\boldsymbol\eta}_i={\mathbf X}_i{\boldsymbol\theta}, \qquad     i=1,\cdots,m,
\end{equation}
where ${\mathbf C}$ is a $J\times (2J-1)$ constant matrix, ${\mathbf L}$ is a $(2J-1)\times J$ constant matrix, depending on the type of logit model, ${\boldsymbol\pi}_i = (\pi_{i1}, \ldots, \pi_{iJ})^T$ are category probabilities at ${\mathbf x}_i$ satisfying $\sum_{j=1}^J \pi_{ij}=1$, $\boldsymbol\eta_i = (\eta_{i1}, \ldots, \eta_{iJ})^T$ are the linear predictors, ${\mathbf X}_i$ is the model matrix consisting of ${\mathbf h}_j({\mathbf x}_i)$ and ${\mathbf h}_c({\mathbf x}_i)$, and $\boldsymbol\theta = (\boldsymbol\beta_1^T, \ldots, \boldsymbol\beta_{J-1}^T, \boldsymbol\zeta^T)^T$ consists of $p=p_1 + \cdots + p_{J-1} + p_c$ regression parameters. Please see \cite{bu2020} for more details and examples.

According to Theorem~5.1 in \cite{bu2020}, for cumulative logit models, the parameter space $\boldsymbol\Theta = \{\boldsymbol\theta \in R^p \mid {\mathbf h}_j^T({\mathbf x}_i) \boldsymbol\beta_j < {\mathbf h}_{j+1}^T ({\mathbf x}_i) \boldsymbol\beta_{j+1}, \mbox{ for }j=1, \ldots, J-2, i=1, \ldots, m\}$ depends on the range of covariates. For the other three logit models, $\boldsymbol\Theta$ is typically $R^p$ itself. Apparently, neither the parameter space $\boldsymbol\Theta$ nor the model~\eqref{eq:logitunifiedmodel} is affected by a permutation of the data ${\mathbf Y}_i$~.

By reorganizing the formulae in Section~S.11 of the Supplementary Material of \cite{bu2020}, we write the category probabilities $\pi_{ij}$ as explicit functions of $\eta_{ij}$ (and thus of $\boldsymbol\theta$) in Lemma~\ref{lem:pi_as_eta}. To simplify the notation, we denote $\rho_{ij} = {\rm logit}^{-1}(\eta_{ij}) = e^{\eta_{ij}}/(1+e^{\eta_{ij}})$, and thus $\rho_{ij}/(1-\rho_{ij}) = e^{\eta_{ij}}$, for $j=1, \ldots, J-1$, and $\rho_{i0}\equiv 0$, for $i=1, \ldots, m$.

\begin{lemma}\label{lem:pi_as_eta}
For the four logit models \eqref{eq:model_baseline}, \eqref{eq:cumulative_model}, \eqref{eq:adjacent_model}, and \eqref{eq:continuation_model},
\begin{equation}\label{eq:pi_ij}
\pi_{ij} = \left\{
\begin{array}{cl}
\frac{\frac{\rho_{ij}}{1-\rho_{ij}}}{1+\sum_{l=1}^{J-1} \frac{\rho_{il}}{1-\rho_{il}}} & \mbox{, baseline category}\\
\rho_{ij}-\rho_{i,j-1} & \mbox{, cumulative}\\
\frac{\prod_{l=j}^{J-1} \frac{\rho_{il}}{1-\rho_{il}}}{1 + \sum_{s=1}^{J-1} \prod_{l=s}^{J-1} \frac{\rho_{il}}{1-\rho_{il}}} & \mbox{, adjacent categories}\\
\prod_{l=0}^{j-1} (1-\rho_{il}) \cdot \rho_{ij} & \mbox{, continuation ratio,}
\end{array}
\right.
\end{equation}
for $i=1, \ldots, m$ and $j=1, \ldots, J-1$. In addition, 
\begin{equation}\label{eq:pi_iJ}
\pi_{iJ} = \left\{
\begin{array}{cl}
\frac{1}{1+\sum_{l=1}^{J-1} \frac{\rho_{il}}{1-\rho_{il}}} & \mbox{, baseline category}\\
1-\rho_{i,J-1} & \mbox{, cumulative}\\
\frac{1}{1 + \sum_{s=1}^{J-1} \prod_{l=s}^{J-1} \frac{\rho_{il}}{1-\rho_{il}}} & \mbox{, adjacent categories}\\
\prod_{l=1}^{J-1} (1-\rho_{il}) & \mbox{, continuation ratio,}
\end{array}
\right.
\end{equation}
for $i=1, \ldots, m$.
\end{lemma}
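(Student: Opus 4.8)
The plan is to handle the four models in \eqref{eq:pi_ij}--\eqref{eq:pi_iJ} one at a time. In each case I would start from the model's defining log-ratio equation, convert it via the identity $\frac{\rho_{ij}}{1-\rho_{ij}} = e^{\eta_{ij}}$ stated just before the lemma, and then pin down the overall scale using the single normalization constraint $\sum_{j=1}^J \pi_{ij} = 1$. No deep machinery is needed; the work is purely algebraic bookkeeping, and the only subtlety is choosing the right intermediate quantities so that the relevant sums and products telescope cleanly.

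For the \emph{baseline-category} model this is immediate: \eqref{eq:model_baseline} gives $\pi_{ij} = \pi_{iJ}\, e^{\eta_{ij}} = \pi_{iJ}\,\frac{\rho_{ij}}{1-\rho_{ij}}$ for $j = 1, \ldots, J-1$, and summing over $j$ (including the $j=J$ term, which contributes $\pi_{iJ}$) yields $\pi_{iJ} = \bigl(1 + \sum_{l=1}^{J-1} \frac{\rho_{il}}{1-\rho_{il}}\bigr)^{-1}$, from which both displayed formulas follow. The \emph{cumulative} case is equally direct once I introduce the cumulative probabilities $\gamma_{ij} = \pi_{i1} + \cdots + \pi_{ij}$: equation \eqref{eq:cumulative_model} reads $\log\frac{\gamma_{ij}}{1-\gamma_{ij}} = \eta_{ij}$, so $\gamma_{ij} = \mathrm{logit}^{-1}(\eta_{ij}) = \rho_{ij}$, and taking successive differences gives $\pi_{ij} = \gamma_{ij} - \gamma_{i,j-1} = \rho_{ij} - \rho_{i,j-1}$ (using $\rho_{i0} = 0$) with $\pi_{iJ} = 1 - \gamma_{i,J-1} = 1 - \rho_{i,J-1}$.

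The \emph{adjacent-categories} model requires one telescoping step. From \eqref{eq:adjacent_model}, $\frac{\pi_{ij}}{\pi_{i,j+1}} = \frac{\rho_{ij}}{1-\rho_{ij}}$, so multiplying the consecutive ratios gives $\frac{\pi_{ij}}{\pi_{iJ}} = \prod_{l=j}^{J-1} \frac{\rho_{il}}{1-\rho_{il}}$; normalizing over $j$ (with the $j=J$ term corresponding to the empty product) then produces the stated expressions with common denominator $1 + \sum_{s=1}^{J-1} \prod_{l=s}^{J-1} \frac{\rho_{il}}{1-\rho_{il}}$. I expect the \emph{continuation-ratio} model to be the one real obstacle, since its formula involves a running product $\prod_{l=0}^{j-1}(1-\rho_{il})$ rather than a single ratio. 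The clean way to handle it is to work with the tail sums $S_{ij} = \pi_{ij} + \cdots + \pi_{iJ}$: equation \eqref{eq:continuation_model} says $\pi_{ij} = \frac{\rho_{ij}}{1-\rho_{ij}}\,S_{i,j+1}$, and combining this with $S_{ij} = \pi_{ij} + S_{i,j+1}$ gives both the recursion $S_{i,j+1} = (1-\rho_{ij})\,S_{ij}$ and the identity $\pi_{ij} = \rho_{ij}\,S_{ij}$. Starting from $S_{i1} = \sum_{j=1}^J \pi_{ij} = 1$ and unrolling the recursion yields $S_{ij} = \prod_{l=1}^{j-1}(1-\rho_{il})$, whence $\pi_{ij} = \rho_{ij}\prod_{l=1}^{j-1}(1-\rho_{il})$ and $\pi_{iJ} = S_{iJ} = \prod_{l=1}^{J-1}(1-\rho_{il})$; absorbing the harmless factor $(1-\rho_{i0}) = 1$ recovers the product written from $l=0$ in \eqref{eq:pi_ij}.
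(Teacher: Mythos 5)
Your derivation is correct in all four cases: the baseline-category and cumulative formulas follow immediately from normalization and from identifying the cumulative probabilities with $\rho_{ij}$, the adjacent-categories telescoping of consecutive ratios is right, and the tail-sum recursion $S_{i,j+1}=(1-\rho_{ij})S_{ij}$ together with $\pi_{ij}=\rho_{ij}S_{ij}$ cleanly produces the continuation-ratio product (the $l=0$ factor being $1$ since $\rho_{i0}=0$). The paper itself supplies no proof, only a pointer to reorganized formulae in the supplement of the cited reference, so your argument is precisely the direct verification the lemma calls for.
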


Because $\eta_{ij} = {\mathbf h}_j^T({\mathbf x}_i)\boldsymbol\beta_j+{\mathbf h}_c^T({\mathbf x}_i)\boldsymbol\zeta$, Lemma~\ref{lem:pi_as_eta} indicates that $\pi_{ij}$ are functions of ${\mathbf x}_i$ and $\boldsymbol\theta$, and do not depend on ${\mathbf Y}_i$ or ${\mathbf Y}_i^\sigma$, which is true for general multinomial logit models~\eqref{eq:logitunifiedmodel}. The following theorem provides a sufficient condition for $\sigma_1 \sim \sigma_2$.

\begin{thm}\label{thm:permutation} Consider the multinomial logit model~\eqref{eq:logitunifiedmodel} with independent observations and two permutations, $\sigma_1, \sigma_2\in {\mathcal P}$. Suppose for any $\boldsymbol\theta_1 \in \boldsymbol\Theta$, there exists a $\boldsymbol\theta_2 \in \boldsymbol\Theta$, and vice versa, such that,
\begin{equation}\label{eq:permutation_pi}
	\pi_{i\sigma_1^{-1}(j)}(\boldsymbol\theta_1) = \pi_{i\sigma_2^{-1}(j)}(\boldsymbol\theta_2) ,
\end{equation}
for all $i=1, \ldots, m$ and $j=1, \ldots, J$. Then, $\sigma_1 \sim \sigma_2$. Furthermore, $\sigma \sigma_1 \sim \sigma \sigma_2$~, for any $\sigma \in {\mathcal P}$.
\end{thm}

Here, $\sigma\sigma_1$ in Theorem~\ref{thm:permutation} represents the composition of $\sigma$ and $\sigma_1$. That is, $\sigma\sigma_1(j) = \sigma(\sigma_1(j))$, for all $j$. Using this notation, $(\sigma\sigma_1)^{-1} = \sigma_1^{-1}\sigma^{-1}$.

The proof of Theorem~\ref{thm:permutation} is relegated to the Supplementary Material.

\begin{thm}\label{thm:baseline_ppo}
    Consider the baseline-category logit model~\eqref{eq:model_baseline} with ppo. Suppose ${\mathbf h}_1({\mathbf x}_i) = \cdots = {\mathbf h}_{J-1}({\mathbf x}_i)$, for all $i=1, \ldots, m$. Then, all orders of response categories that keep $J$ invariant are equivalent.
\end{thm}

Theorem~\ref{thm:baseline_ppo} includes baseline-category logit models with po, because ${\mathbf h}_j({\mathbf x}_i) \equiv 1$ for po models. It actually includes many npo or ppo models used in practice, where ${\mathbf h}_1 = \cdots = {\mathbf h}_{J-1}$. For example, main-effects models that assume ${\mathbf h}_1({\mathbf x}_i) = \cdots = {\mathbf h}_{J-1}({\mathbf x}_i) = (1,{\mathbf x}_i^T)^T$ are widely used. 

Theorem~\ref{thm:baseline_ppo} also implies that if $\sigma(J) \neq J$, then $\sigma$ may not be equivalent to the original order ${\rm id}$, or the identity permutation. We provide such a numerical example in Section~\ref{sec:baseline_category}.

%%%%%%%%%%%%%%cumu %%%%%%%%%%%%%
\begin{thm}\label{thm:cumulative_order}
    Consider the cumulative logit model~\eqref{eq:cumulative_model} with ppo. Suppose ${\mathbf h}_j({\mathbf x}_i) = {\mathbf h}_{J-j}({\mathbf x}_i)$, for all $i=1, \ldots, m$ and $j=1, \ldots, J-1$. Then, any order $\sigma_1$ is equivalent to its reverse order $\sigma_2$, which satisfies $\sigma_2(j) = \sigma_1(J+1-j)$, for $j=1, \ldots, J$. 
\end{thm}

Theorem~\ref{thm:cumulative_order} includes cumulative logit models with po, because ${\mathbf h}_j({\mathbf x}_i) \equiv 1$. It also includes many npo or ppo models used in practice that satisfy ${\mathbf h}_1 = \cdots = {\mathbf h}_{J-1}$~.

%%%%%%%%%%%%acat po%%%%%%%%%%%%%%%%
\begin{thm}\label{thm:adjacent_ppo}
    Consider the adjacent-categories logit model~\eqref{eq:adjacent_model} with ppo. Suppose ${\mathbf h}_1({\mathbf x}_i) = \cdots ={\mathbf h}_{J-1} ({\mathbf x}_i)$, for all $i=1, \ldots, m$. Then, any order $\sigma_1$ is equivalent to its reverse order $\sigma_2$~, which satisfies $\sigma_2(j) = \sigma_1(J+1-j)$, for all $j=1, \ldots, J$.
\end{thm}

\subsection{Nonproportional odds (npo) models}\label{sec:npo_order_equivalence}

By removing the item ${\mathbf h}_c^T({\mathbf x}_i)\boldsymbol\zeta$ from \eqref{eq:model_baseline}, \eqref{eq:cumulative_model}, \eqref{eq:adjacent_model}, and \eqref{eq:continuation_model}, we obtain  explicit forms of the four logit models with npo. For npo models, $\boldsymbol\theta = (\boldsymbol\beta_1^T, \ldots,$ $\boldsymbol\beta_{J-1}^T)^T$, $p=p_1 + \cdots + p_{J-1}$~, and $\eta_{ij} = {\mathbf h}_j^T({\mathbf x}_i)\boldsymbol\beta_j$, for $i=1, \ldots, m$ and $j=1, \ldots, J-1$. Compared with po models, npo models involve more regression parameters, and thus are more flexible. For more details about multinomial logistic models with npo, please see, for example, Section~S.8 in the Supplementary Material of \cite{bu2020}.

\begin{thm}\label{thm:baseline_npo}
    Consider the baseline-category logit model with npo. Suppose ${\mathbf h}_1({\mathbf x}_i) = \cdots = {\mathbf h}_{J-1}({\mathbf x}_i)$, for all $i=1, \ldots, m$. Then, all orders of response categories are equivalent.
\end{thm}

Theorem~\ref{thm:baseline_npo} confirms that the choice of baseline category does not matter for multiclass logistic regression models \citep{hastie2009elements}. What is new here is the explicit correspondence between $\boldsymbol\theta_1$ and $\boldsymbol\theta_2$ provided in the proof of Theorem~\ref{thm:baseline_npo}. Based on the correspondence, if we obtain the maximum likelihood estimate (MLE) for $\boldsymbol\theta_1$, we can easily derive the MLE for $\boldsymbol\theta_2$ explicitly, without running another optimization.

%%%%%%%%%acat npo%%%%%%%%%%%
\begin{thm}\label{thm:adjacent_npo}
    Consider the adjacent-categories logit model with npo. Suppose ${\mathbf h}_1({\mathbf x}_i) = \cdots = {\mathbf h}_{J-1} ({\mathbf x}_i)$, for all $i=1, \ldots, m$. Then, all orders of response categories are equivalent.
\end{thm}

The result of Theorem~\ref{thm:adjacent_npo} is truly surprising. The order of the response categories in an ordinal model does not matter! The transformation~(S.1) and its inverse~(S.3) in the proof of Theorem~\ref{thm:adjacent_npo} in the Supplementary Material are not trivial either. 

%%%%%%%%%%%%%%sratio npo%%%%%%%%%%%%
\begin{thm}\label{thm:continuation_npo}
    For the continuation-ratio logit model with npo, $\sigma_1 \sim \sigma_2$ if $\sigma_2 = \sigma_1 (J-1,J)$, where $(J-1,J)$ is a transposition that switches $J-1$ and $J$. 
\end{thm}

\section{Asymptotic Optimality of True Order}\label{sec:optimal_true_order}

In this section, we discuss the optimality of the true order of the response categories, when it exists. In short, the model with the true order is asymptotically optimal in terms of an AIC or BIC likelihood-based model selection criterion.

Suppose an experiment is performed under the multinomial logit model \eqref{eq:logitunifiedmodel}, with predetermined design points ${\mathbf x}_1, \ldots, {\mathbf x}_m$~, the true parameter values $\boldsymbol\theta_0 \in \boldsymbol\Theta \subseteq R^p$, and the true order $\sigma_0 \in {\mathcal P}$ of the response categories. Recall that the original experiment assigns $n_i$ subjects to ${\mathbf x}_i$~, with the total number of subjects $n = \sum_{i=1}^m n_i$. To avoid trivial cases, we assume $n_i > 0$ for each $i$ (otherwise, we may delete any ${\mathbf x}_i$ with $n_i=0$). 

In order to consider the asymptotic properties of the parameter and order estimators, we consider independent and identically distributed (i.i.d.) observations $(X_l, Y_l)$, for $l=1, \ldots, N$, generated as follows: (i) $X_1, \ldots, X_N$ are i.i.d.~from a discrete distribution taking values in $\{{\mathbf x}_1, \ldots, {\mathbf x}_m\}$ with probabilities $n_1/n, \ldots, n_m/n$, respectively; (ii) given $X_l = {\mathbf x}_i$, $Y_l$ follows Multinomial$(1; \pi_{i\sigma_0^{-1}(1)}(\boldsymbol\theta_0), \ldots,$ $\pi_{i\sigma_0^{-1}(J)}(\boldsymbol\theta_0))$, that is, $Y_l$ takes values in $\{1,$ $\ldots,$ $J\}$ with probabilities $\pi_{i\sigma_0^{-1}(1)}(\boldsymbol\theta_0), \ldots,$  $\pi_{i\sigma_0^{-1}(J)}(\boldsymbol\theta_0)$, respectively.  The summarized data can still be denoted as $\{({\mathbf x}_i, {\mathbf Y}_i), i=1, \ldots, m\}$, where
\[
{\mathbf Y}_i = (Y_{i1}, \ldots, Y_{iJ})^T \sim {\rm Multinomial}\left(N_i; \pi_{i\sigma_0^{-1}(1)}(\boldsymbol\theta_0), \ldots, \pi_{i\sigma_0^{-1}(J)}(\boldsymbol\theta_0)\right),
\]
and $N_i = \sum_{j=1}^J Y_{ij}$ is the total number of subjects assigned to ${\mathbf x}_i$~.

Given the count data $Y_{ij}$, the log-likelihood function under the multinomial logit model~\eqref{eq:logitunifiedmodel} with parameters $\boldsymbol\theta$ and a permutation $\sigma\in {\mathcal P}$ applied to $Y_{ij}$ to determine the true order is
\begin{eqnarray*}
l(\boldsymbol\theta, \sigma) &=&  \sum_{i=1}^m \sum_{j=1}^J Y_{i\sigma(j)} \log\pi_{ij}(\boldsymbol\theta) + \sum_{i=1}^m \log (N_i!) - \sum_{i=1}^m \sum_{j=1}^J \log (Y_{i\sigma(j)}!)\\
&=&  \sum_{i=1}^m \sum_{j=1}^J Y_{ij} \log\pi_{i\sigma^{-1}(j)}(\boldsymbol\theta) + \sum_{i=1}^m \log (N_i!) - \sum_{i=1}^m \sum_{j=1}^J \log (Y_{ij}!)\\
&=& \sum_{i=1}^m \sum_{j=1}^J Y_{ij} \log\pi_{i\sigma^{-1}(j)}(\boldsymbol\theta) + {\rm constant},
\end{eqnarray*}
Then, the MLE $(\hat{\boldsymbol\theta}, \hat\sigma)$ that maximizes $l(\boldsymbol\theta, \sigma)$ maximizes $$l_N(\boldsymbol\theta, \sigma) = \sum_{i=1}^m \sum_{j=1}^J Y_{ij} \log\pi_{i\sigma^{-1}(j)}(\boldsymbol\theta)$$ as well.  

\begin{lemma}\label{lem:mle_pi_ij}
If $\hat{\boldsymbol\theta} \in \boldsymbol\Theta$ and $\hat{\sigma} \in {\mathcal P}$ satisfy $\pi_{i\hat\sigma^{-1}(j)}(\hat{\boldsymbol\theta}) = Y_{ij}/N_i$ for all $i$ and $j$, then $(\hat{\boldsymbol\theta}, \hat\sigma)$ must be an MLE.
\end{lemma}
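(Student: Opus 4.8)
The plan is to reduce the claim to the information inequality (Gibbs' inequality). Since $l(\boldsymbol\theta,\sigma)$ and $l_N(\boldsymbol\theta,\sigma)=\sum_{i=1}^m\sum_{j=1}^J Y_{ij}\log\pi_{i\sigma^{-1}(j)}(\boldsymbol\theta)$ differ only by a constant not involving $(\boldsymbol\theta,\sigma)$, as already noted above, it suffices to show that $(\hat{\boldsymbol\theta},\hat\sigma)$ maximizes $l_N$ over $\boldsymbol\Theta\times{\mathcal P}$. The first observation I would make is that $l_N$ splits across design points, $l_N(\boldsymbol\theta,\sigma)=\sum_{i=1}^m\big(\sum_{j=1}^J Y_{ij}\log\pi_{i\sigma^{-1}(j)}(\boldsymbol\theta)\big)$, and that for every fixed $(\boldsymbol\theta,\sigma)$ the numbers $p_{ij}:=\pi_{i\sigma^{-1}(j)}(\boldsymbol\theta)$, $j=1,\ldots,J$, form a genuine probability vector on the $i$th simplex: they are strictly positive under the model and satisfy $\sum_{j=1}^J p_{ij}=\sum_{j=1}^J\pi_{ij}(\boldsymbol\theta)=1$.

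The key step would then be a termwise upper bound that is uniform in $(\boldsymbol\theta,\sigma)$. For each $i$, I would apply the information inequality to the empirical distribution $q_{ij}=Y_{ij}/N_i$ against the model distribution $p_{ij}$: with the convention $Y_{ij}\log p_{ij}=0$ whenever $Y_{ij}=0$,
\[
\sum_{j=1}^J Y_{ij}\log p_{ij}=N_i\sum_{j=1}^J q_{ij}\log p_{ij}\le N_i\sum_{j=1}^J q_{ij}\log q_{ij}=\sum_{j=1}^J Y_{ij}\log\frac{Y_{ij}}{N_i},
\]
which is exactly the nonnegativity of the Kullback--Leibler divergence between $q_{i\cdot}$ and $p_{i\cdot}$ and holds for any probability vector $p_{i\cdot}$ with strictly positive entries, with equality if and only if $p_{ij}=q_{ij}$ for all $j$. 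Summing over $i$ yields $l_N(\boldsymbol\theta,\sigma)\le\sum_{i=1}^m\sum_{j=1}^J Y_{ij}\log(Y_{ij}/N_i)$ for every $(\boldsymbol\theta,\sigma)\in\boldsymbol\Theta\times{\mathcal P}$.

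Finally, the hypothesis $\pi_{i\hat\sigma^{-1}(j)}(\hat{\boldsymbol\theta})=Y_{ij}/N_i$ says precisely that $(\hat{\boldsymbol\theta},\hat\sigma)$ attains this bound with equality in every term, so it is a global maximizer of $l_N$ and hence an MLE. The one point that needs care, and where I expect the only real friction, is the bookkeeping with zero counts: when $Y_{ij}=0$ the corresponding summand vanishes irrespective of $p_{ij}$, so I would restrict each inner sum to indices with $Y_{ij}>0$ before invoking Gibbs' inequality, and I would invoke positivity of the $\pi_{ij}(\boldsymbol\theta)$ to guarantee that every logarithm in play is finite. The argument deliberately avoids any optimization over the model manifold: it bounds $l_N$ against \emph{all} probability vectors and then observes that the assumed fit already lands on this unconstrained optimum, which is why the conclusion follows with no further regularity conditions.
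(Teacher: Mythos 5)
Your argument is correct and is essentially the paper's own proof: both decompose $l_N$ across design points, bound each inner sum by its unconstrained maximum over the probability simplex (the paper cites this fact as well known from \cite{johnson1997discrete}, you derive it explicitly via Gibbs' inequality), and conclude that the hypothesis forces equality termwise. The only difference is that you spell out the information inequality and the zero-count convention rather than citing the standard multinomial MLE result.
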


To explore the asymptotic properties of log-likelihood and MLEs, we denote $l_0 = \sum_{i=1}^m \sum_{j=1}^J n_i \pi_{ij}(\boldsymbol\theta_0) \log\pi_{ij}(\boldsymbol\theta_0)/n  \in (-\infty, 0)$, which is a finite constant. We further denote $n_0 = \min\{n_1, \ldots, n_m\} \geq 1$ and $\pi_0 = \min\{\pi_{i\sigma_0^{-1}(j)}(\boldsymbol\theta_0), i=1, \ldots, m; j=1, \ldots, J\} \in (0, 1)$.

\begin{lemma}\label{lem:limit_l_N}
As $N\rightarrow \infty$, $N^{-1} l_N(\boldsymbol\theta_0, \sigma_0) \rightarrow l_0 < 0$ almost surely.
\end{lemma}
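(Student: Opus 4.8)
The plan is to recognize $N^{-1} l_N(\boldsymbol\theta_0, \sigma_0)$ as a fixed linear combination of the empirical frequencies $Y_{ij}/N$ and to apply the strong law of large numbers (SLLN) to each frequency separately. First I would note that, because the pairs $(X_l, Y_l)$, $l=1,\ldots,N$, are iid, the count $Y_{ij} = \sum_{l=1}^N \mathbbm{1}\{X_l = {\mathbf x}_i,\ Y_l = j\}$ is a sum of $N$ iid Bernoulli indicators, each with mean $P(X_l = {\mathbf x}_i,\ Y_l = j) = \frac{n_i}{n}\,\pi_{i\sigma_0^{-1}(j)}(\boldsymbol\theta_0)$. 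This mean follows from the two-stage generating mechanism: $X_l = {\mathbf x}_i$ with probability $n_i/n$, and then, conditionally, $Y_l = j$ with probability $\pi_{i\sigma_0^{-1}(j)}(\boldsymbol\theta_0)$. Hence the SLLN yields $Y_{ij}/N \to \frac{n_i}{n}\,\pi_{i\sigma_0^{-1}(j)}(\boldsymbol\theta_0)$ almost surely for each fixed $(i,j)$.

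Next, since there are only finitely many index pairs $(i,j)$, I would intersect the finitely many almost-sure convergence events to obtain a single probability-one event on which $Y_{ij}/N$ converges simultaneously for all $(i,j)$. On this event I can pass to the limit term by term in $N^{-1} l_N(\boldsymbol\theta_0, \sigma_0) = \sum_{i=1}^m \sum_{j=1}^J (Y_{ij}/N)\, \log\pi_{i\sigma_0^{-1}(j)}(\boldsymbol\theta_0)$. The coefficients $\log\pi_{i\sigma_0^{-1}(j)}(\boldsymbol\theta_0)$ are finite constants: by the model assumption all category probabilities are strictly positive, and indeed $\pi_{i\sigma_0^{-1}(j)}(\boldsymbol\theta_0) \geq \pi_0 > 0$, so each $\log\pi_{i\sigma_0^{-1}(j)}(\boldsymbol\theta_0) \in [\log\pi_0, 0)$ is bounded. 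Therefore the limit is $\sum_{i=1}^m \sum_{j=1}^J \frac{n_i}{n}\,\pi_{i\sigma_0^{-1}(j)}(\boldsymbol\theta_0)\, \log\pi_{i\sigma_0^{-1}(j)}(\boldsymbol\theta_0)$ almost surely.

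Finally I would show this limit equals $l_0$. For each fixed $i$, the map $j \mapsto \sigma_0^{-1}(j)$ is a bijection of $\{1,\ldots,J\}$, so reindexing by $k = \sigma_0^{-1}(j)$ turns the inner sum $\sum_{j=1}^J \pi_{i\sigma_0^{-1}(j)}(\boldsymbol\theta_0)\log\pi_{i\sigma_0^{-1}(j)}(\boldsymbol\theta_0)$ into $\sum_{k=1}^J \pi_{ik}(\boldsymbol\theta_0)\log\pi_{ik}(\boldsymbol\theta_0)$, which is exactly the inner sum in the definition of $l_0$; weighting by $n_i/n$ and summing over $i$ then gives $l_0$. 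This confirms $N^{-1} l_N(\boldsymbol\theta_0, \sigma_0) \to l_0$ almost surely, with $l_0 \in (-\infty, 0)$ since $x\log x < 0$ for $x \in (0,1)$. There is no genuinely hard step here; the only points requiring care are the finiteness of the log-coefficients (secured by $\pi_0 > 0$, which lets the term-by-term limit go through) and the permutation-relabeling identity, which is precisely what makes the permuted population limit collapse onto the unpermuted constant $l_0$.
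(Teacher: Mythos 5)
Your proof is correct and follows essentially the same route as the paper's: the strong law of large numbers applied to the empirical frequencies, a term-by-term passage to the limit (valid since the coefficients $\log\pi_{i\sigma_0^{-1}(j)}(\boldsymbol\theta_0)$ are finite because all category probabilities are strictly positive), and the reindexing $k=\sigma_0^{-1}(j)$ to identify the limit with $l_0$. The only difference is cosmetic: you apply the SLLN directly to $Y_{ij}/N$ as an average of iid joint indicators, whereas the paper factors $Y_{ij}/N=(N_i/N)\cdot(Y_{ij}/N_i)$ and treats the two factors separately, which obliges it to first verify $\min_i N_i\rightarrow\infty$ almost surely --- a step your version sidesteps.
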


Let $(\hat{\boldsymbol\theta}_N, \hat{\sigma}_N)$ denote an MLE that maximizes $l_N(\boldsymbol\theta, \sigma)$ or, equivalently, $l(\boldsymbol\theta, \sigma)$. The following theorem indicates that the true values $(\boldsymbol\theta_0, \sigma_0)$ asymptotically maximize $l_N(\boldsymbol\theta, \sigma)$ as well. That is, the true parameter values $\boldsymbol\theta_0$ and the true order $\sigma_0$ are asymptotically optimal in terms of the likelihood principle.

\begin{thm}\label{thm:consist_order}
As $N\rightarrow \infty$, $N^{-1}|l_N(\hat{\boldsymbol\theta}_N, \hat{\sigma}_N) - l_N(\boldsymbol\theta_0, \sigma_0)| \rightarrow 0$ almost surely. Furthermore, $N^{-1} l_N(\hat{\boldsymbol\theta}_N, \hat{\sigma}_N) \rightarrow l_0$ and $l_N(\boldsymbol\theta_0, \sigma_0)/l_N(\hat{\boldsymbol\theta}_N, \hat{\sigma}_N) \rightarrow 1$ almost surely as well.
\end{thm}

Following \cite{burnham2004aic}, we define 
\begin{eqnarray*}
    {\rm AIC}(\boldsymbol\theta, \sigma) &=& -2 l(\boldsymbol\theta, \sigma) + 2p\\
    {\rm BIC}(\boldsymbol\theta, \sigma) &=& -2 l(\boldsymbol\theta, \sigma) + (\log N) p .
\end{eqnarray*}
Then, the usual ${\rm AIC} = {\rm AIC}(\hat{\boldsymbol\theta}_N, \hat{\sigma}_N) \leq {\rm AIC}(\boldsymbol\theta_0, \sigma_0)$, and the usual ${\rm BIC} = {\rm BIC}(\hat{\boldsymbol\theta}_N, \hat{\sigma}_N) \leq {\rm BIC}(\boldsymbol\theta_0, \sigma_0)$. As a direct conclusion of Theorem~\ref{thm:consist_order}, we have the following corollary.

\begin{corollary}\label{cor:AIC_BIC}
$N^{-1}|{\rm AIC} - {\rm AIC}(\boldsymbol\theta_0, \sigma_0)| = N^{-1}|{\rm BIC} - {\rm BIC}(\boldsymbol\theta_0, \sigma_0)| \rightarrow 0$ almost surely, as $N\rightarrow \infty$.
\end{corollary}

Theorem~\ref{thm:consist_order} and Corollary~\ref{cor:AIC_BIC} confirm that the true parameter and order $(\boldsymbol\theta_0, \sigma_0)$ are among the best options asymptotically under likelihood-based model selection criteria. Nevertheless, note that there are two cases where one may not be able to identify the true order $\sigma_0$ easily.

{\it Case one:} If there exists another order $\sigma\sim \sigma_0$, as discussed in Section~\ref{sec:order_equivalence}, then there exists another $\boldsymbol\theta$ such that $\pi_{i\sigma^{-1}(j)}(\boldsymbol\theta) = \pi_{i\sigma_0^{-1}(j)}({\boldsymbol\theta_0})$, for all $i$ and $j$. In this case, $l(\boldsymbol\theta, \sigma) = l(\boldsymbol\theta_0, \sigma_0)$, and the true order $\sigma_0$ with $\boldsymbol\theta_0$ is not distinguishable from the order $\sigma$ with ${\boldsymbol\theta}$.

{\it Case two:} In practice, given the set of experimental settings $\{{\mathbf x}_1, \ldots, {\mathbf x}_m\}$, it is not unlikely, especially when the range of experimental settings is narrow, that there exists a $(\boldsymbol\theta, \sigma)$ such that $\pi_{i\sigma^{-1}(j)}(\boldsymbol\theta) \approx \pi_{i\sigma_0^{-1}(j)}({\boldsymbol\theta_0})$, for all $i$ and $j$. Then, the order $\sigma$ with ${\boldsymbol\theta}$ achieves roughly the same likelihood.  
In this case, the difference between $\sigma_0$ and $\sigma$ could be insignificant, and may not improve unless one increases the sample size $N$ and the range of $\{{\mathbf x}_1, \ldots, {\mathbf x}_m\}$ (e.g., see Sections~\ref{sec:simulation_trauma} and \ref{sec:order_x_i}).
Compared with po models, npo models have more parameters and are more flexible. As a result, Case~two may occur more often for npo models.

In both Case~one and Case~two, the true order $\sigma_0$ is not significantly better than all other orders, even with an increased sample size $N$. Nevertheless, the results of our simulation studies (see Section~\ref{sec:simulation_studies}) show that there are two situations in which the true order $\sigma_0$ can be identified easily. One is with larger absolute values of the regression coefficients (e.g., see Section~\ref{sec:order_treatment_effect}). The other is with a larger range of experimental settings (see Section~\ref{sec:order_x_i}). Both situations reduce the possibility of Case~two, but neither can fix Case~one.

\section{Simulation Studies}\label{sec:simulation_studies}

\subsection{Simulation study based on trauma data}\label{sec:simulation_trauma}

In the example mentioned in Section~\ref{sec:introduction}, the trauma data (see Table~1 in \cite{chuang1997} or Table~7.6 in \cite{agresti2010analysis}) are the responses of $N=802$ trauma patients from four treatment groups under different dose levels. The response categories (GOS) have a clear order, namely, {\tt death} (1), {\tt vegetative state} (2), {\tt major disability} (3), {\tt minor disability} (4), and {\tt good recovery} (5). That is, the original order $\sigma_0={\rm id}$ is the true order. In this case, $J=5$. For illustration purposes, the only predictor $x$ is chosen as the dose level, taking values in $\{1,2,3,4\}$.

In this section, we present simulation studies to explore whether we can identify the true order of the response categories under different multinomial logit models. For each model, for example, the baseline-category logit model with po (see {\it baseline-category po} in Table~\ref{tab:trauma:ori_size}), (i) we fit the model using the original data $(Y_{ij})_{ij}$ against the dose level $x_i=i$ to obtain estimated parameter values $\hat{\boldsymbol\theta}_o$; (ii) we simulate a new data set $(Y'_{ij})_{ij}$ with total sample size $N$ using the model with regression coefficients $\hat{\boldsymbol\theta}_o$; that is, for the simulated data, we have a true order $\sigma_0={\rm id}$ and true parameter values $\boldsymbol\theta_0=\hat{\boldsymbol\theta}_o$; (iii) for each possible order $\sigma \in {\mathcal P}$, we fit the model using permuted simulated data $(Y'_{i\sigma(j)})_{ij}$ and calculate the corresponding AIC value; and (iv) we check the difference between the AIC at $\sigma_0$ (denoted as AIC$_0$) and the AIC at $\hat\sigma$ (denoted as AIC$_*$) that minimizes the AIC value, as well as the rank of AIC$_0$ among all orders. Ideally, we have AIC$_0-$AIC$_* = 0$ with rank $1$ out of $5!=120$; 
that is, the original order achieves the smallest AIC value. Nevertheless, in practice, the result depends on the sample size $N$ and the set of experimental settings.

\begin{table}[ht]
\caption{\label{tab:trauma:ori_size}Trauma Simulation Study with $N=802$}
\begin{center}
\begin{tabular}{c | c c | c c}
\hline
& \multicolumn{2}{c|}{True Order} & \multicolumn{2}{c}{Best Order} \\
\cline {2-5}
Model & \textsc{aic}$_0$ & Rank & \textsc{aic}$_*$ & \textsc{aic}$_0$$-$\textsc{aic}$_*$\\
\hline
Baseline-category po & 96.77 & 73 & 96.40 & 0.37\\
Cumulative po & 94.54 & 7 & 93.84 & 0.70\\
Cumulative npo & 102.15 & 51 & 101.70 & 0.45\\
Adjacent-categories po & 92.66 & 3 & 92.08 & 0.58\\
Continuation-ratio po & 96.25 & 20 & 94.38 & 1.87\\
Continuation-ratio npo & 102.43 & 21 & 102.03 & 0.40\\
\hline
\end{tabular}
\end{center}
\end{table}

In Table~\ref{tab:trauma:ori_size}, we list the simulation results with $N=802$, the original sample size.  According to Theorems~\ref{thm:baseline_npo} and \ref{thm:adjacent_npo}, all orders under a baseline-category npo model or an adjacent-categories npo model are indistinguishable in terms of the AIC. Thus, we omit these two models, and list the other six commonly used multinomial logit models. In Table~\ref{tab:trauma:ori_size}, the true order is not evident under any of the multinomial logit models; that is, the rank is not one, or the AIC value is not the smallest. 
To check whether the true order can be regarded approximately as the best one, or whether the difference between the true order and the best order is significant, we denote $\Delta = \text{AIC}_0 - \text{AIC}_*$, the difference in terms of their AIC values. According to \cite{burnham2004aic}, $\Delta\leq 2$ suggests that the true order 
is considered substantially the best; $4\leq \Delta \leq 7$ indicates that the true order is considerably less than optimal; and $\Delta > 10$ suggests the true order is essentially worse than the best one. Because no AIC difference in the last column of  Table~\ref{tab:trauma:ori_size} is greater than two, we conclude that the AIC differences between the true order and the best order are not significant.  Note that because the number of parameters does not change across orders, using the BIC is equivalent to using the AIC here.

The simulation results (not listed in Table~\ref{tab:trauma:ori_size}) also show that all 120 orders under the continuation-ratio po model lead to distinct AIC values. That is, all orders are distinguishable, or no two orders are equivalent, supporting the corresponding statement in Table~\ref{tab:summary}. 

\begin{table}[ht]
\caption{\label{tab:trauma:ten_size}Trauma Simulation Study with $N=8020$}
%\centering
\begin{center}
\begin{tabular}{c |c c c c}
    \hline
     & \multicolumn{2}{c}{True Order} & \multicolumn{2}{c}{Best Order}\\
     \cline {2-5}
     Model & AIC$_0$ & Rank & AIC$_*$ & AIC$_0-$AIC$_*$\\
     \hline
     Baseline-category po & 133.19 & 1 & 133.19 & 0\\
     Cumulative po & 130.52 & 1 & 130.52 & 0\\
     Cumulative npo & 135.55 & 1 & 135.55 & 0\\
     Adjacent-categories po & 130.31 & 1 & 130.31 & 0\\
     Continuation-ratio po & 130.54 & 1 & 130.54 & 0\\
     Continuation-ratio npo & 141.51 & 21 & 139.96 & 1.55\\
     \hline
    \end{tabular}
\end{center}
\end{table}

In Table~\ref{tab:trauma:ten_size}, we increase the sample size $N$ to $8020$, $10$ times as large as the original one, to numerically check the asymptotic optimality of the true order. That is, the new data $(Y_{i1}', \ldots, Y_{iJ}') \sim {\rm Multinomial}(10 n_i; \pi_{i1}(\hat{\boldsymbol\theta}_o), \ldots,$ $ \pi_{iJ}(\hat{\boldsymbol\theta}_o))$, where $\hat{\boldsymbol\theta}_o$ is fitted from the original data $(Y_{ij})_{ij}$ and $n_i = \sum_{j=1}^J Y_{ij}$~.
Clearly, all models except the continuation-ratio npo model perform best in terms of the true order of the response categories. This confirms our conclusion in Section~\ref{sec:optimal_true_order} that the true order is asymptotically optimal. In this case, the continuation-ratio npo model behaves differently. With $10$ times the original sample size, the true order still ranks 21st, with an even bigger AIC difference 1.55 (not statistically significant either).  
If we further increase the sample size to $N=40\times 802$, the true order ranks third with an AIC difference $0.05$. Actually, even for a very large $N$, there are still six orders among the top tier, the true order, transpositions $(3,4)$ and $(3,5)$, and their equivalent orders (see Theorem~\ref{thm:continuation_npo}). This simulation study provides a numerical example for Case~two described in Section~\ref{sec:optimal_true_order}.

\subsection{True order and treatment effects}\label{sec:order_treatment_effect}

In Section~\ref{sec:optimal_true_order}, we noted that larger absolute values of the regression coefficients may make the true order easier to identify. In this section, we illustrate such a scenario using a cumulative po model, which has fewer coefficients and is easier to modify.

The cumulative po model for the trauma data consists of $p=5$ parameters $\boldsymbol\theta = (\beta_1, \beta_2, \beta_3, \beta_4, \zeta)^T$, where $\zeta$ represents the treatment effect of the dose level $x$. By fitting the model using the original data, we obtain 
the estimated parameters $\hat{\boldsymbol\theta} = (-0.7192, -0.3186, 0.6916, 2.057, -0.1755)^T$. Similarly to Section~\ref{sec:simulation_trauma}, we treat $\sigma_0={\rm id}$ and $\boldsymbol\theta_0 = \hat{\boldsymbol\theta}$ as the true values, and simulate a new data set $(Y'_{ij})_{ij}$ with the original sample size $N=802$. We then check for each possible order $\sigma$ using the simulated data set. In contrast to Section~\ref{sec:simulation_trauma}, we run four simulation studies, with $\zeta = -0.1755, -0.3510, 0.3510, -0.7020$, respectively, to observe how the magnitude of the treatment effects affects the differences between the orders. 

\begin{table}[ht]
\caption{\label{tab:asymp:coef}Best Order and Treatment Effects}
%\centering
\footnotesize
\begin{center}
\begin{tabular}{c|cc|c|c}
    \hline
    Treatment effect $\zeta$ & AIC (true) & Rank & AIC (best) & AIC (3rd best when true=best) \\
     \hline
     $-0.1755$ & 94.54 & 7 & 93.84 & -\\
     $-0.3510$ & 98.93 & 3 & 98.23 & -\\
     \hspace{0.15cm} $0.3510$ & 90.84 & 1 & 90.84 & 91.61\\
     $-0.7020$ & 85.17 & 1 & 85.17 & 86.58\\
     \hline
    \end{tabular}
\end{center}
\normalsize
\end{table}

From Table~\ref{tab:asymp:coef}, the true order tends to be the best order as $|\zeta|$ increases. For $\zeta=0.3510$ and $-0.7020$, the true order attains the best or minimum AIC value. According to Theorem~\ref{thm:cumulative_order} and its proof, the reversed order with parameters $\boldsymbol\theta_2 = (-\beta_4, -\beta_3, -\beta_2, -\beta_1, -\zeta)^T$ achieves the same AIC value, which can be viewed as the second best result. From $\zeta=0.3510$ to $\zeta=-0.7020$, the AIC difference between the true order and the third best order increases from $91.61 - 90.84= 0.77$ to $86.58 - 85.17 = 1.41$, indicating that a larger treatment effect might make the true order easier to identify.

\subsection{True order and experimental settings}\label{sec:order_x_i}

In this section, we discuss how the  experimental setting ${\mathbf x}_i$ affects the identification of the true order. We use the trauma data under the continuation-ratio npo model as an example, because this is a difficult case in which to identify the true order, according to the results shown in Table~\ref{tab:trauma:ten_size}. 

We explore two ways of changing the set $\{x_i, i=1, \ldots, m\} = \{1,2,3,4\}$ in the trauma data. In the first, we increase the range to $X_A = \{1, 2, \ldots, 16\}$. In the second, we make finer changes to the experiment settings and obtain $X_B = \{1, 1.25, 1.50,$ $\ldots,$ $3.75, 4\}$, the range of which is still $[1,4]$. In both methods, the number $m$ of experimental settings increases significantly.

Similarly to Section~\ref{sec:simulation_trauma}, we treat $\sigma_0 = {\rm id}$ and $\boldsymbol\theta_0 = \hat{\boldsymbol\theta}$, estimated for the continuation-ratio npo model from the original data, as the true values. In this section, we simulate $100$ new data sets $(Y_{ij}^{(b)})_{ij}$ independently, with $N=100\times 210m$ and $b=1, \ldots, 100$, using the continuation-ratio npo model with $\sigma_0$ and $\boldsymbol\theta_0$. For each $b=1, \ldots, 100$ and each $\sigma \in {\mathcal P}$, we fit the continuation-ratio npo model using the permuted data $(Y_{i\sigma(j)}^{(b)})_{ij}$ against the dose level $x_i$. The corresponding AIC values are denoted as AIC$_\sigma^{(b)}$, for $b=1, \ldots, 100$. To compare the true order $\sigma_0$ with each $\sigma$ of the other $119$ orders, we run a one-sided paired $t$-test on $({\rm AIC}_{\sigma_0}^{(b)})_b$ and $({\rm AIC}_{\sigma}^{(b)})_b$. A significant $p$-value indicates that the AIC value associated with $\sigma_0$ is significantly smaller than the AIC value of $\sigma$.

Under the first scenario $X_A$,  $118$ out of $119$ p-values are statistically significant, indicating that the true order is significantly better than all other orders, except the equivalent one listed by Theorem~\ref{thm:continuation_npo}. That is, an increased range may make the true order easier to identify. 

Under the second scenario $X_B$, there are still three $p$-values greater than $0.05$. Further tests indicate that these four orders, including the true order, are indistinguishable. That is, increasing $m$ while maintaining the range of $x_i$ may not be an efficient way to improve the identifiability of the true order.

\subsection{Choice of baseline category}\label{sec:baseline_category}

In this section, we use cross-validation to show that the choice of baseline category makes a difference for baseline-category po models.

\begin{table}[ht]
\caption{\label{tab:basepocv_data}Data from Baseline-category po Model with Baseline Category $J=4$}
\centering
\begin{tabular}{c|c c c c|c}
    \hline
     $x_i=i$ & $Y_{i1}$ & $Y_{i2}$ & $Y_{i3}$ & $Y_{i4}$ & $n_i$ \\
     \hline
     1 & 22 & 33 & 10 & 35 & 100\\
     2 & 31 & 40 & 14 & 15 & 100\\
     3 & 23 & 43 & 22 & 12 & 100\\
     4 & 27 & 49 & 18 &  6 & 100\\
     \hline
    \end{tabular}
\end{table}

Table~\ref{tab:basepocv_data} provides  simulated data from a baseline-category po model, with the fourth category as the true baseline category. The parameters used for simulating the data are $\boldsymbol{\theta}=(\beta_1,\beta_2,\beta_3,\zeta)^T=(-0.8,-0.3,-1.0,0.5)^T$.
There are $n=400$ observations in total. We randomly split the observations into two parts, $267$ as training data and $133$ as testing data. We repeat the random partition $100$ times. For each random partition and each order $\sigma\in {\mathcal P}$, (1) we use the training data to fit the baseline-category po model with order $\sigma$ (similarly to Section~\ref{sec:simulation_trauma}), and denote the fitted model as Model~$\sigma$; (2) we predict the labels of the responses of the testing data using Model~$\sigma$, and use the cross-entropy loss (e.g., see \cite{hastie2009elements}) to measure the prediction errors.
In this way, we have $100$ prediction errors (cross-entropy loss) for each $\sigma$. For any two orders, we can run a one-sided paired $t$-test to check whether one order's prediction error is significantly lower than that of the other (similarly to Section~\ref{sec:order_x_i}). 

We conclude the following from this cross-validation study: (i) all orders that share the same baseline category have the same cross-entropy loss, indicating that they are indistinguishable in terms of their prediction errors, and supporting the results of Theorem~\ref{thm:baseline_ppo}; (ii) supported by the pairwise $t$-tests, the orders with the true baseline ($J=4$) have significantly smaller cross-entropy losses than the other orders, with $p$-values $5.69 \times 10^{-42}, 9.54 \times 10^{-47}$, and $8.99 \times 10^{-51}$, respectively, showing that the correct choice of baseline category matters in practice.

\subsection{When true order does not exist}\label{sec:order_nominal}

In this section, we investigate an order misspecification issue when the true order does not exist. More specifically, for a baseline-category npo model, all orders are equivalent, according to Theorem~\ref{thm:baseline_npo}. In other words, there is no true order. Nevertheless, given such a data set, we can still find the best model with the best order, called the {\it working} order. 
The simulation study below shows that when the true order does not exist, we have the following results: (1) with moderate sample sizes, a working order with a different model can be selected, but may not be significantly better than the true model in terms of the AIC; and (2) asymptotically, the true model without a true order may be significantly better than any other model with any working order. 

In this simulation study, we use the baseline-category npo model fitted from the original trauma data as the true model, and simulate a data set with the set of covariate levels $X_A=\{1, 2, \ldots, 16\}$, which is a more informative experimental setting (see Section~\ref{sec:order_x_i}). For each level $x_i \in X_A$, we simulate $n_A = 200$ observations, with the total sample size $N = 16 n_A = 3,200$. For the simulated data with $N = 3,200$, the best model according to the AIC is a continuation-ratio npo model, with AIC = $321.31$, at its best (working) order $\{${\tt death}, {\tt major disability}, {\tt vegetative state}, {\tt minor disability}, {\tt good recovery}$\}$, whereas the AIC value of the true model is $322.48$. In other words, the best model with the working order is not significantly better than the true model. If we increase the sample size $N$ to $3,200\times 100$, the continuation-ratio npo model is still the best model, with AIC = $604.93$ at the best order $\{${\tt vegetative state}, {\tt death}, {\tt minor disability}, {\tt major disability}, {\tt good recovery}$\}$, which again is not significantly better than the true model, with AIC = $606.75$. If we further increase $N$ to $3,200 \times 10,000$, the true model becomes the best model, with AIC = $914.80$, and is significantly better than the continuation-ratio npo model, with AIC = $944.92$ at its best order.

We repeat the procedure $100$ times with various $N$. For each simulated trauma data set from the baseline-catetory npo model, we calculate the difference between  AIC$_{\rm true}$ (the AIC value with the true model) and AIC$_{\rm other}$ (the smallest AIC value among all other models and all orders). In Table~\ref{tab:frequency_tale}, we list the frequencies of the AIC differences falling into different ranges. For example, The first column ``$<0$'' provides the numbers of simulations out of 100 with AIC$_{\rm true}$ $-$ AIC$_{\rm other} < 0$. As $N$ increases from $1\times 3,200$ to $10,000 \times 3,200$, the number of cases increases from $0$ to $100$, showing that the true model is increasingly likely to outperform other models with any order in terms of AIC values. The other columns in Table~\ref{tab:frequency_tale} show similar patterns, confirming the conclusions described at the beginning of this section.

\begin{table}[ht]
\caption{\label{tab:frequency_tale}Frequencies of AIC$_{\rm true}$ $-$ AIC$_{\rm other}$ Categories out of $100$ Simulated Trauma Data from Baseline-category npo Model}
\centering
\begin{center}
\begin{tabular}{r|rrrrrr}
    \hline
$N$ &   $<0$ &   $[0, 2)$  &  $[2, 4)$  & $[4, 7)$ & $[7, 10)$  & $\geq 10$\\ \hline
$1\times 3,200$ &    0  &    57  &   27  &   11  &   2   &      3\\
$10\times 3,200$   & 13  &  55 &    25  &   7  &     0  &        0\\
$100\times 3,200$  &  36  &  44 &    16  &   3    &    0   &       1\\
$10,000\times 3,200$   &   100   &  0   &    0   &   0    &    0     &     0
\\
    \hline
    \end{tabular}
\end{center}
\end{table}

\section{Real-Data Analysis}\label{sec:real_data}

\begin{table}[ht]
\caption{\label{tab:police_data}US Police Involved Fatalities Data (2000--2016)}
\centering
\footnotesize
\begin{center}
\begin{tabular}{cccc|rrrr|r}
    \hline
    Armed & Gender & Flee & Mental Illness & Other & Shot & Shot and Tasered & Tasered & Total \\
    \hline
    Gun & Female & False & False & 0 & 134 & 0 & 1 & 135\\
    Gun & Female & False & True & 0 & 57 & 0 & 1 & 58\\
    Gun & Female & True & False & 0 & 8 & 0 & 0 & 8\\
    Gun & Female & True & True & 0 & 4 & 0 & 0 & 4\\
    Gun & Male & False & False & 2 & 3314 & 6 & 35 & 3357\\
    Gun & Male & False & True & 0 & 810 & 4 & 15 & 829\\
    Gun & Male & True & False & 0 & 271 & 5 & 0 & 276\\
    Gun & Male & True & True & 0 & 33 & 1 & 0 & 36\\
    Other & Female & False & False & 0 & 53 & 1 & 0 & 54\\
    Other & Female & False & True & 1 & 42 & 1 & 0 & 44\\
    Other & Female & True & False & 0 & 4 & 1 & 0 & 5\\
    Other & Female & True & True & 0 & 2 & 0 & 0 & 2\\
    Other & Male & False & False & 1 & 910 & 38 & 10 & 959\\
    Other & Male & False & True & 2 & 478 & 21 & 5 & 506\\
    Other & Male & True & False & 0 & 114 & 10 & 0 & 124\\
    Other & Male & True & True & 0 & 14 & 2 & 0 & 16\\
    Unarmed & Female & False & False & 1 & 231 & 0 & 3 & 235\\
    Unarmed & Female & False & True & 0 & 61 & 0 & 5 & 66\\
    Unarmed & Female & True & False & 0 & 2 & 0 & 0 & 2\\
    Unarmed & Male & False & False & 10 & 4338 & 16 & 253 & 4617\\
    Unarmed & Male & False & True & 12 & 832 & 5 & 214 & 1063\\
    Unarmed & Male & True & False & 0 & 75 & 8 & 0 & 83\\
    Unarmed & Male & True & True & 0 & 5 & 1 & 0 & 6\\
    \hline
    \end{tabular}
{\footnotesize Note: The group (Unarmed, Female, Fleed, Has Mental Illness) contains no observation and is omitted.} 
\end{center}
\normalsize
\end{table}

The US Police Involved Fatalities Data (hereafter, Police data) were downloaded from data.world (\url{https://data.world/awram/us-police-involved-fatalities}, version June 21, 2020), which was collected by Chris Awarm from three data resources, namely, \url{https://fatalencounters.org/}, \url{https://www.gunviolencearchive.org/}, and {\tt Fatal Police Shootings}, from data.world.
The original data lists individuals killed by the police in the United States from 2000 to 2016, including information on 12,483 suspects' age, race, mental health status, weapons they were armed with, and whether or not they were fleeing. By way of example, we focus on whether the police's action can be predicted by the aforementioned information related to a suspect.

As summarized in Table~\ref{tab:police_data}, there are four categories of (police) responses, namely, {\tt other}, {\tt shot}, {\tt shot and tasered}, and {\tt tasered}. In our notation, $J=4$.
In the original data, there are $60$ different types of armed status. Here, we simplify these into three categories: {\tt gun}, if the original input is ``gun,''  {\tt unarmed}, if the original input is ``unarmed'' or missing, and {\tt other}, if otherwise. As such, we have $24$ possible level combinations of \texttt{armed status} ($x_{i1} = $ $1$ (gun), $2$ (other), or $3$ (unarmed)), \texttt{gender} ($x_{i2}=$ $0$ (female) or $1$ (male)), \texttt{flee} ($x_{i3}=$ $0$ (false) or $1$ (true)), \texttt{mental illness} ($x_{i4}=$ $0$ (false) or $1$ (true)). Because there is no observation associated with ${\mathbf x}_i = (x_{i1}, x_{i2}, x_{i3}, x_{i4})^T = (3, 0, 1, 1)^T$, $m=23$ in this case (see Table~\ref{tab:police_data}).

As an example, we consider the main-effects baseline-category, cumulative, adjacent-categories, and continuation-ratio logit models with po or npo. In our notation, ${\mathbf h}_1({\mathbf x}_i) = \cdots = {\mathbf h}_{J-1}({\mathbf x}_i) = (1, {\mathbf 1}_{\{x_{i1}=2\}}, {\mathbf 1}_{\{x_{i1}=3\}}, x_{i2}, x_{i3},$ $ x_{i4})^T$ for all eight models under consideration. For each model, we choose the best order out of $4!=24$ of the four response categories, based on the AIC. Of the eight logit models, each with $24$ orders, the continuation-ratio npo model with the chosen order (t, s, o, st) or (t, s, st, o) (see Table~\ref{tab:police_order}) performs best, and can be written as 
\begin{equation*}
    \log\left(\frac{\pi_{ij}}{\pi_{i,j+1}+\cdots+\pi_{iJ}}\right)=\beta_{j1}+\beta_{j2}{\mathbf 1}_{\{x_{i1}=2\}}+\beta_{j3}{\mathbf 1}_{\{x_{i1}=3\}}+\beta_{j4}x_{i2}+\beta_{j5}x_{i3}+\beta_{j6}x_{i4} \ ,
\end{equation*}
where $j=1,2,3$ and $i=1,\ldots,23$. The corresponding BIC values, not shown here, provide a consistent selection result. According to the AIC values, if we choose the continuation-ratio npo model with the best order $\{${\tt tasered}, {\tt shot}, {\tt other}, {\tt shot and tasered}$\}$ against the baseline-category models (po or npo), which are commonly used for categorical responses without an order, the improvement in the prediction accuracy is significant (AIC differences $>5$). Note that in this case, it is not trivial to determine the baseline category for po models, owing to the existence of the category {\tt other}.

Table~\ref{tab:police_order} also shows large gaps of AIC values between the npo models and the corresponding po models, indicating that npo models are significantly better than the corresponding po models in this case. The differences within the AIC values of the npo models are also much smaller than those within the po models. This provides strong evidence that for the Police data, the parameters for the categories are very different, and thus the proportional odds (po) assumptions are not appropriate (see the Introduction). 

To validate the selected model, we conduct  five-fold cross-validation for the data, with cross-entropy loss as the criterion (e.g., see \cite{hastie2009elements}). We compare our selected continuation-ratio npo model with the baseline-category npo model, which is commonly used for nominal responses. In terms of cross-entropy loss, the continuation-ratio npo model achieves $550.50$, which is less than the value of $555.91$ for the baseline-category npo model. This is consistent with our conclusion based on AIC values.

The estimated parameters for the chosen continuation-ratio npo model are provided in Table~\ref{tab:police:result}, which can be used to interpret the roles and effects of different factors. For example, $\hat\beta_{13} = 2.03$ indicates that the estimated odds ratio of {\tt tasered} and ``unarmed'' is $e^{2.03}=7.61$, which implies that ``unarmed'' leads to a much smaller chance of being shot. 
In contrast, $\hat\beta_{15} = -18.02$, with an estimated odds ratio $e^{-18.02}=1.49\times 10^{-8}$, implies that suspects who flee have a much greater chance of being shot. Because {\tt shot} is usually regarded as more severe than {\tt tasered}, the estimated parameters imply that if suspects show a greater threat such as being armed (gun or other), or try to flee, the police tend to take more extreme actions, such as shooting a gun.

\begin{table}[t!]
\caption{\label{tab:police_order}Model and Order Selection for Police Data}
%\centering
\begin{center}
\begin{tabular}{c|c| c}
    \hline
     Model & AIC with Best Order & Best Order\\
     \hline
     Baseline-category po & 401.33 & t is the baseline\\
     Baseline-category npo & 197.81 & All are the same\\
     Cumulative po & 318.14 & (st, s, o, t) or (t, o, s, st)\\
     Cumulative npo & 194.48 & (o, st, s, t) or (t, s, st, o)\\
     Adjacent-categories po & 290.17 & (st, s, o, t) or (t, o, s, st)\\
     Adjacent-categories npo & 197.81 & All are the same\\
     Continuation-ratio po & 320.22 & (t, o, s, st)\\
     Continuation-ratio npo & 192.01 & (t, s, o, st) or (t, s, st, o)\\
    \hline
    \multicolumn{3}{l}{\small Note: s = {\tt shot}, t = {\tt tasered}, o = {\tt other}, st = {\tt shot and tasered}.}
    \end{tabular}
\end{center}
\end{table}

\begin{table}[t!]
\caption{\label{tab:police:result}Estimated Parameters for Police Data under Continuation-ratio npo Model}
\centering
\footnotesize
\begin{tabular}{r|r r r r r r}
    \hline
     $j$ & Intercept & Armed Status & Armed Status & Gender & Flee & Mental Illness \\
     & $\hat\beta_{j1}$ & $\hat\beta_{j2}$ (Other) & $\hat\beta_{j3}$ (Unarmed) & $\hat\beta_{j4}$ & $\hat\beta_{j5}$ & $\hat\beta_{j6}$\\
     \hline
     1 & -6.00 & -0.44 & 2.03 & 1.17 & -18.02 & 1.34\\
     2 & 6.47 & -2.43 & -1.09 & -0.58 & -1.55 & -0.59\\
     3 & 0.26 & -1.49 & 1.69 & -2.29 & -28.35 & 1.02\\
     \hline
         \multicolumn{7}{l}{\small Note: $j=1,2,3$ correspond to {\tt tasered}, {\tt shot}, and {\tt other}, respectively.}
    \end{tabular}
\normalsize    
\end{table}

To investigate whether the best order is chosen because of randomness, we conduct a simulation study similar to that in Section~\ref{sec:order_nominal}. We regenerate the Police data simulated from the baseline-category npo model fitted from the original data (see Table~\ref{tab:police:basenpo_result} for the parameter values). For the simulated Police data, the best model is the cumulative npo model with the order $\{${\tt shot and tasered}, {\tt shot}, {\tt other}, {\tt tasered}$\}$ with AIC $149.77$, which is not significantly better than the true model (baseline-category npo), with AIC $151.04$. 
If we increase the sample size by a factor of $10$, the true model with AIC $271.90$ becomes better than the cumulative npo model, with AIC $274.53$. If we further increase it by a factor of $100$, the true model with AIC $401.92$ is significantly better than the cumulative npo model, with AIC $440.45$. 
In other words, with the original sample size of the Police data, if there is no true order, then the selected model with a working order may not be significantly better than the true model. As the amount of data increases, the true model becomes significantly better than the other models with any working order, supporting the simulation results in Section~\ref{sec:order_nominal}.

\begin{table}[t!]
\caption{\label{tab:police:basenpo_result}Estimated Parameters for Police Data under Baseline-category npo Model}
\centering
\footnotesize
\begin{tabular}{r|r r r r r r}
    \hline
     $j$ & Intercept & Armed Status & Armed Status & Gender & Flee & Mental Illness \\
     & $\hat\beta_{j1}$ & $\hat\beta_{j2}$ (Other) & $\hat\beta_{j3}$ (Unarmed) & $\hat\beta_{j4}$ & $\hat\beta_{j5}$ & $\hat\beta_{j6}$\\
     \hline
     1 & -2.07 & -1.34 & 6.00 & -1.18 & -0.82 & -0.31\\
     2 & 0.24 & -8.83 & -2.04 & 11.09 & -1.39 & 12.74\\
     3 & 1.89 & 0.19 & 0.40 & -1.35 & 2.93 & -1.02\\
     \hline
         \multicolumn{7}{l}{\small Note: $j=1,2,3$ correspond to {\tt other}, {\tt shot}, {\tt shot and tasered}, respectively.}    \end{tabular}
\normalsize
\end{table}

\section{Discussion}\label{sec:discussion}

In our analysis of real data, we consider both the type of model and the order of the categories as parts of the model selection procedure. Our examples show that the improvement by choosing a more appropriate order can be highly significant. For example, if we focus on adjacent-categories po models for the Police data, the smallest AIC value from the best order is $290.17$, whereas the largest one from the worst order is $674.44$, implying a significant difference in terms of prediction accuracy.

For the Police data, there is no natural order, owing to the existence of the category {\tt other}. According to the AIC values, the continuation-ratio npo model with the working order (t, s, o, st), or equivalently (t, s, st, o), is significantly better than the baseline-category npo models (see Table~\ref{tab:police_order}). On the one hand, if there is a true order, it will appear among the best orders consistently (see Section~\ref{sec:optimal_true_order}). On the other hand, if there is no true order for the Police data, then there is no ordinal model with a working order that is significantly better than the baseline-category npo model, with the current or a larger sample size (see the simulation study at the end of Section~\ref{sec:real_data}). In other words, we can report the working order with confidence if it is significantly better than other orders.

Finally, in the proofs of Theorems~\ref{thm:baseline_ppo}, \ref{thm:cumulative_order}, \ref{thm:adjacent_ppo}, \ref{thm:baseline_npo}, \ref{thm:adjacent_npo}, and \ref{thm:continuation_npo}, we provide explicit transformation formulae from $\boldsymbol\theta_1$ with $\sigma_1$ to $\boldsymbol\theta_2$ with $\sigma_2$ when $\sigma_2 \sim \sigma_1$. This significantly reduces the computational cost of finding MLEs with different orders.

\bigskip
\noindent {\bf Supplementary Material}

\noindent The online Supplementary Material contains proofs of Theorems~\ref{thm:permutation}, \ref{thm:baseline_ppo}, \ref{thm:cumulative_order}, \ref{thm:adjacent_ppo}, \ref{thm:baseline_npo}, \ref{thm:adjacent_npo}, \ref{thm:continuation_npo}, and \ref{thm:consist_order}, Lemmas~\ref{lem:mle_pi_ij} and \ref{lem:limit_l_N}, and Corollary~\ref{cor:AIC_BIC}.

\bigskip
\noindent {\bf Acknowledgements}

\noindent This work was supported, in part, by the US NSF grant DMS-1924859.

\bibhang=1.7pc
\bibsep=2pt
\fontsize{9}{14pt plus.8pt minus .6pt}\selectfont
\renewcommand\bibname{\large \bf References}
%\begin{thebibliography}{11}
\expandafter\ifx\csname
natexlab\endcsname\relax\def\natexlab#1{#1}\fi
\expandafter\ifx\csname url\endcsname\relax
  \def\url#1{\texttt{#1}}\fi
\expandafter\ifx\csname urlprefix\endcsname\relax\def\urlprefix{URL}\fi

\clearpage
\setcounter{page}{1}
\def\thepage{S\arabic{page}}

\centerline{\large\bf Identifying the Most Appropriate Order}
\vspace{2pt}
 \centerline{\large\bf for Categorical Responses}
\vspace{.25cm}
 \centerline{Tianmeng Wang and Jie Yang} 
\vspace{.4cm}
 \centerline{\it University of Illinois at Chicago}
\vspace{.55cm}
 \centerline{\bf Supplementary Material}
\vspace{.55cm}
\fontsize{9}{11.5pt plus.8pt minus .6pt}\selectfont
%\noindent
%CONTENT OF A BRIEF NOTE.........\\
%CONTENT OF THE BRIEF NOTE.\\
%CONTENT OF THE BRIEF NOTE.\\
\par

\setcounter{section}{0}
\setcounter{equation}{0}
\def\theequation{S.\arabic{equation}}
\def\thesection{S\arabic{section}}

\fontsize{12}{14pt plus.8pt minus .6pt}\selectfont

%\section*{Appendix}

\begin{proof} of Theorem~2.1:
The log-likelihood of the model at $\boldsymbol\theta_1$ with permuted responses ${\mathbf Y}_i^{\sigma_1}$ is
\begin{equation*}
    l_1(\boldsymbol\theta_1)=\sum_{i=1}^m \log(n_i!)-\sum_{i=1}^m \sum_{j=1}^J \log(Y_{i\sigma_1(j)}!)+\sum_{i=1}^m \sum_{j=1}^J Y_{i\sigma_1(j)}\log \pi_{ij}(\boldsymbol\theta_1)
\end{equation*}
while the log-likelihood at $\boldsymbol\theta_2$ with ${\mathbf Y}_i^{\sigma_2}$ is
\begin{equation*}
    l_2(\boldsymbol\theta_2)=\sum_{i=1}^m \log(n_i!)-\sum_{i=1}^m \sum_{j=1}^J \log(Y_{i\sigma_2(j)}!)+\sum_{i=1}^m \sum_{j=1}^J Y_{i\sigma_2(j)}\log \pi_{ij}(\boldsymbol\theta_2)
\end{equation*}
Since ${\mathbf Y}_i^{\sigma_1}$ and ${\mathbf Y}_i^{\sigma_2}$ are different only at the order of individual terms, $$\sum_{j=1}^J \log(Y_{i\sigma_1(j)}!) = \sum_{j=1}^J \log(Y_{i\sigma_2(j)}!)$$ 
On the other hand, $\pi_{ij}(\boldsymbol\theta_2) = \pi_{i\sigma_2(\sigma_1^{-1}(j))}(\boldsymbol\theta_1)$ implies that \begin{eqnarray*}
\sum_{j=1}^J Y_{i\sigma_2(j)} \log \pi_{ij}(\boldsymbol\theta_2) &=& \sum_{j=1}^J Y_{i\sigma_2(j)} \log \pi_{i\sigma_2(\sigma_1^{-1}(j))}(\boldsymbol\theta_1)\\ &=& \sum_{j=1}^J Y_{ij} \log \pi_{i\sigma_1^{-1}(j)}(\boldsymbol\theta_1)\\ &=& \sum_{j=1}^J Y_{i\sigma_1(j)} \log \pi_{ij}(\boldsymbol\theta_1)
\end{eqnarray*}
Therefore, $l_1(\boldsymbol\theta_1) = l_2(\boldsymbol\theta_2)$, which implies $\max_{\boldsymbol\theta_1} l_1(\boldsymbol\theta_1) \leq \max_{\boldsymbol\theta_2} l_2 (\boldsymbol\theta_2)$. Similarly, $\max_{\boldsymbol\theta_2} l_2 (\boldsymbol\theta_2) \leq \max_{\boldsymbol\theta_1} l_1(\boldsymbol\theta_1)$. Thus $\max_{\boldsymbol\theta_1} l_1(\boldsymbol\theta_1) = \max_{\boldsymbol\theta_2} l_2 (\boldsymbol\theta_2)$.

Given that (2.4) is true for $\sigma_1$ and $\sigma_2$, for any permutation $\sigma \in {\mathcal P}$, $$\pi_{i\sigma_1^{-1}(\sigma^{-1}(j))} (\boldsymbol\theta_1) = \pi_{i\sigma_2^{-1}(\sigma^{-1}(j))} (\boldsymbol\theta_2)$$ for all $i$ and $j$. That is, $\pi_{i(\sigma\sigma_1)^{-1}(j)} (\boldsymbol\theta_1) = \pi_{i(\sigma\sigma_2)^{-1}(j)} (\boldsymbol\theta_2)$ for all $i$ and $j$. Following the same proof above, we have $\sigma\sigma_1 \sim \sigma\sigma_2$~.
%\hfill{$\Box$}
\end{proof}

\begin{proof} of Theorem~2.2: 
We first show that $\sigma_1={\rm id}$, the identity permutation, and any permutation $\sigma_2$ satisfying $\sigma_2(J)=J$ are equivalent. Actually, given any $\boldsymbol\theta_1 = (\boldsymbol\beta_1^T, \ldots, \boldsymbol\beta_{J-1}^T, \boldsymbol\zeta^T)^T$ for $\sigma_1 = {\rm id}$, we let $\boldsymbol\theta_2 = (\boldsymbol\beta_{\sigma_2(1)}^T, \ldots, \boldsymbol\beta_{\sigma_2(J-1)}^T, \boldsymbol\zeta^T)^T$ for $\sigma_2$. Then $\eta_{ij}(\boldsymbol\theta_2) = {\mathbf h}_j^T({\mathbf x}_i) \boldsymbol\beta_{\sigma_2(j)} + {\mathbf h}_c^T({\mathbf x}_i) \boldsymbol\zeta = {\mathbf h}_{\sigma_2(j)}^T({\mathbf x}_i) \boldsymbol\beta_{\sigma_2(j)} + {\mathbf h}_c^T({\mathbf x}_i) \boldsymbol\zeta = \eta_{i\sigma_2(j)} (\boldsymbol\theta_1)$ for all $i=1, \ldots, m$ and $j=1, \ldots, J-1$. According to (2.2) and (2.3), $\pi_{ij}(\boldsymbol\theta_2) = \pi_{i\sigma_2(j)} (\boldsymbol\theta_1)$ for all $i=1, \ldots, m$ and $j=1, \ldots, J$. Then ${\rm id} \sim \sigma_2$ is obtained by Theorem~2.1. 

For general $\sigma_1$ and $\sigma_2$ satisfying $\sigma_1(J) = \sigma_2(J) = J$, ${\rm id} \sim \sigma_1^{-1}\sigma_2$ implies $\sigma_1 \sim \sigma_2$ by Theorem~2.1.
%\hfill{$\Box$}
\end{proof}

\begin{proof} of Theorem~2.3:
Given $\boldsymbol\theta_1 = (\boldsymbol\beta_1^T, \boldsymbol\beta_2^T, \ldots, \boldsymbol\beta_{J-1}^T, \boldsymbol\zeta^T)^T$ with $\sigma_1$, we let $\boldsymbol\theta_2 = (-\boldsymbol\beta_{J-1}^T, -\boldsymbol\beta_{J-2}^T, \ldots, -\boldsymbol\beta_1^T, -\boldsymbol\zeta^T)^T$ for $\sigma_2$. Then $\eta_{ij}(\boldsymbol\theta_2) = -{\mathbf h}_j^T({\mathbf x}_i) \boldsymbol\beta_{J-j} - {\mathbf h}_c^T ({\mathbf x}_i) \boldsymbol\zeta = -{\mathbf h}_{J-j}^T({\mathbf x}_i) \boldsymbol\beta_{J-j} - {\mathbf h}_c^T ({\mathbf x}_i) \boldsymbol\zeta = -\eta_{i,J-j}(\boldsymbol\theta_1)$ and thus $\rho_{ij}(\boldsymbol\theta_2) = 1-\rho_{i,J-j}(\boldsymbol\theta_1)$ for all $i=1, \ldots, m$ and $j=1, \ldots, J-1$. It can be verified that $\pi_{ij}(\boldsymbol\theta_2) = \pi_{i,J+1-j}(\boldsymbol\theta_1)$ for all $i=1, \ldots, m$ and $j=1, \ldots, J$ according to (2.2) and (2.3). Then for all $i=1, \ldots, m$ and $j=1, \ldots, J$,
\begin{eqnarray*}
\pi_{i\sigma_2^{-1}(j)}(\boldsymbol\theta_2) &=& \pi_{i,J+1-\sigma_2^{-1}(j)}(\boldsymbol\theta_1)
= \pi_{i\sigma_1^{-1}(\sigma_1(J+1-\sigma_2^{-1}(j)))} (\boldsymbol\theta_1)\\
&=& \pi_{i\sigma_1^{-1}(\sigma_2(\sigma_2^{-1}(j)))} (\boldsymbol\theta_1)
= \pi_{i\sigma_1^{-1}(j)} (\boldsymbol\theta_1)
\end{eqnarray*}
That is, (2.4) holds given $\boldsymbol\theta_1$~. Since it is one-to-one from $\boldsymbol\theta_1$ to $\boldsymbol\theta_2$, (2.4) holds given $\boldsymbol\theta_2$ as well. According to Theorem~2.1, $\sigma_1 \sim \sigma_2$~.
%\hfill{$\Box$}
\end{proof}

\begin{proof} of Theorem~2.4:
Similar as the proof of Theorem~2.3, for ppo models satisfying ${\mathbf h}_1({\mathbf x}_i) = \cdots ={\mathbf h}_{J-1} ({\mathbf x}_i)$, given $\boldsymbol\theta_1 = (\boldsymbol\beta_1^T, \boldsymbol\beta_2^T, \ldots, \boldsymbol\beta_{J-1}^T, \boldsymbol\zeta^T)^T$ with $\sigma_1$, we let $\boldsymbol\theta_2 = (-\boldsymbol\beta_{J-1}^T, -\boldsymbol\beta_{J-2}^T, \ldots, -\boldsymbol\beta_1^T, -\boldsymbol\zeta^T)^T$ for $\sigma_2$~. Then $\eta_{ij}(\boldsymbol\theta_2) = -{\mathbf h}_j^T({\mathbf x}_i) \boldsymbol\beta_{J-j} - {\mathbf h}_c^T ({\mathbf x}_i) \boldsymbol\zeta = -{\mathbf h}_{J-j}^T({\mathbf x}_i) \boldsymbol\beta_{J-j} - {\mathbf h}_c^T ({\mathbf x}_i) \boldsymbol\zeta = -\eta_{i,J-j}(\boldsymbol\theta_1)$ and thus $\rho_{ij}(\boldsymbol\theta_2) = 1-\rho_{i,J-j}(\boldsymbol\theta_1)$ for all $i=1, \ldots, m$ and $j=1, \ldots, J-1$. It can be verified that for $j=1, \ldots, J-1$,
\[
\prod_{l=j}^{J-1} \frac{\rho_{il}(\boldsymbol\theta_2)}{1-\rho_{il}(\boldsymbol\theta_2)} = \prod_{l=j}^{J-1} \frac{1-\rho_{i,J-l}(\boldsymbol\theta_1)}{\rho_{i,J-l}(\boldsymbol\theta_1)} = \prod_{l=1}^{J-j} \frac{1-\rho_{il}(\boldsymbol\theta_1)}{\rho_{il}(\boldsymbol\theta_1)}
\]
implies $\pi_{ij}(\boldsymbol\theta_2) = \pi_{i,J+1-j}(\boldsymbol\theta_1)$ for all $i=1, \ldots, m$ and $j=1, \ldots, J$ due to (2.2) and (2.3). Then $\pi_{i\sigma_2^{-1}(j)}(\boldsymbol\theta_2) = \pi_{i\sigma_1^{-1}(j)} (\boldsymbol\theta_1)$ similarly as in the proof of Theorem~2.3, which leads to $\sigma_1 \sim \sigma_2$ based on Theorem~2.1.
%\hfill{$\Box$}
\end{proof}

\begin{proof} of Theorem~2.5: 
According to Theorem~2.1, we only need to show that (2.4) holds for $\sigma_1 = {\rm id}$ and an arbitrary permutation $\sigma_2 \in {\mathcal P}$.

{\it Case one:} $\sigma_2(J)=J$. In this case, for any $\boldsymbol\theta_1 = (\boldsymbol\beta_{1}^T, \ldots, \boldsymbol\beta_{J-1}^T)^T$, we let  $\boldsymbol\theta_2 = (\boldsymbol\beta_{\sigma_2(1)}^T, \ldots, \boldsymbol\beta_{\sigma_2(J-1)}^T)^T$. Then $\eta_{ij}(\boldsymbol\theta_2) = {\mathbf h}_j^T({\mathbf x}_i) \boldsymbol\beta_{\sigma_2(j)} = {\mathbf h}_{\sigma(j)}^T({\mathbf x}_i) \boldsymbol\beta_{\sigma_2(j)}$ $=$ $\eta_{i\sigma_2(j)}(\boldsymbol\theta_1)$, which leads to $\rho_{ij}(\boldsymbol\theta_2) = \rho_{i\sigma_2(j)} (\boldsymbol\theta_1)$ for all $i=1, \ldots, m$ and $j=1, \ldots, J-1$. According to (2.2) in Lemma~1, $\pi_{ij}(\boldsymbol\theta_1) = \pi_{i\sigma_2^{-1}(j)}(\boldsymbol\theta_2)$, which is (2.4) in this case. Since the correspondence between $\boldsymbol\theta_1$ and $\boldsymbol\theta_2$ is one-to-one, then (2.4) holds for given $\boldsymbol\theta_2$ as well.

{\it Case two:} $\sigma_2(J)\neq J$. Given $\boldsymbol\theta_1 = (\boldsymbol\beta_{1}^T, \ldots, \boldsymbol\beta_{J-1}^T)^T$, we let  $\boldsymbol\theta_2 = (\boldsymbol\beta_{21}^T, \ldots,$ $\boldsymbol\beta_{2,J-1}^T)^T$ such that for $j=1, \ldots, J-1$,
\[
\boldsymbol\beta_{2j} = \left\{
\begin{array}{cl}
\boldsymbol\beta_{\sigma_2(j)} - \boldsymbol\beta_{\sigma_2(J)} & \mbox{ if }j\neq \sigma_2^{-1}(J)\\
-\boldsymbol\beta_{\sigma_2(J)} & \mbox{ if } j = \sigma_2^{-1}(J)
\end{array}\right.
\]
Then for $i=1, \ldots, m$ and $j=1, \ldots, J-1$, 
\[
\eta_{ij}(\boldsymbol\theta_2) = {\mathbf h}_j^T({\mathbf x}_i) \boldsymbol\beta_{2j} = \left\{
\begin{array}{cl}
\eta_{i\sigma_2(j)} (\boldsymbol\theta_1) - \eta_{i\sigma_2(J)} (\boldsymbol\theta_1) & \mbox{ if }j\neq \sigma_2^{-1}(J)\\
-\eta_{i\sigma_2(J)} (\boldsymbol\theta_1) & \mbox{ if } j = \sigma_2^{-1}(J)
\end{array}\right.
\]
It can be verified that: (i) If
$\sigma_2^{-1}(j) \neq J$ and $j\neq J$, then $\pi_{i\sigma_2^{-1}(j)}(\boldsymbol\theta_2) = \pi_{ij}(\boldsymbol\theta_1)$ according to (2.2); (ii) $\pi_{i\sigma_2^{-1}(J)}(\boldsymbol\theta_2) = \pi_{iJ}(\boldsymbol\theta_1)$ according to (2.2); and (iii) if
$\sigma_2^{-1}(j) = J$, then $\pi_{i\sigma_2^{-1}(j)}(\boldsymbol\theta_2) = \pi_{ij}(\boldsymbol\theta_1)$ according to (2.3). Thus (2.4) holds given $\boldsymbol\theta_1$~. Given $\sigma_2$, the correspondence between $\boldsymbol\theta_1$ and $\boldsymbol\theta_2$ is one-to-one, then (2.4) holds given $\boldsymbol\theta_2$ as well. Thus ${\rm id}\sim \sigma_2$ according to Theorem~2.1.

For general $\sigma_1$ and $\sigma_2$,  ${\rm id} \sim \sigma_1^{-1}\sigma_2$ implies $\sigma_1\sim \sigma_2$ according to Theorem~2.1.
%\hfill{$\Box$}
\end{proof}

\begin{proof} of Theorem~2.6: 
Similar as the proof of Theorem~2.5, we first show that (2.4) holds for $\sigma_1 = {\rm id}$ and an arbitrary permutation $\sigma_2$~.

For any $\boldsymbol\theta_1 = (\boldsymbol\beta_{1}^T, \ldots, \boldsymbol\beta_{J-1}^T)^T$ with $\sigma_1 = {\rm id}$, we let  $\boldsymbol\theta_2 = (\boldsymbol\beta_{21}^T, \ldots, $ $\boldsymbol\beta_{2,J-1}^T)^T$ for $\sigma_2$, where
\begin{equation}\label{eq:adjacent_npo_beta2j}
\boldsymbol\beta_{2j} = \left\{
\begin{array}{cl}
\sum_{l=\sigma_2(j)}^{\sigma_2(j+1)-1} \boldsymbol\beta_l & \mbox{ if } \sigma_2(j) < \sigma_2(j+1)\\
-\sum_{l=\sigma_2(j+1)}^{\sigma_2(j)-1} \boldsymbol\beta_l & \mbox{ if } \sigma_2(j) > \sigma_2(j+1)
\end{array}\right.
\end{equation}
For $j=1, \ldots, J-1$, 
\[\eta_{ij}(\boldsymbol\theta_2) = {\mathbf h}_j^T({\mathbf x}_i) \boldsymbol\beta_{2j} = \sum_{l=\sigma_2(j)}^{\sigma_2(j+1)-1} {\mathbf h}_l^T({\mathbf x}_i) \boldsymbol\beta_l = \sum_{\sigma_2(j)}^{\sigma_2(j+1)-1} \eta_{il}(\boldsymbol\theta_1)
\] 
if $\sigma_2(j) < \sigma_2(j+1)$;  and $\eta_{ij}(\boldsymbol\theta_2) = -\sum_{l=\sigma_2(j+1)}^{\sigma_2(j)-1} \eta_{il}(\boldsymbol\theta_1)$ if $\sigma_2(j) > \sigma_2(j+1)$. It can be verified that \eqref{eq:adjacent_npo_beta2j} implies for any $1\leq j<k\leq J-1$,
\begin{equation}\label{eq:adjacent_npo_sum_beta2j}
\sum_{l=j}^k\boldsymbol\beta_{2j} = \left\{
\begin{array}{cl}
\sum_{l=\sigma_2(j)}^{\sigma_2(k+1)-1} \boldsymbol\beta_l & \mbox{ if } \sigma_2(j) < \sigma_2(k+1)\\
-\sum_{l=\sigma_2(k+1)}^{\sigma_2(j)-1} \boldsymbol\beta_l & \mbox{ if } \sigma_2(j) > \sigma_2(k+1)
\end{array}\right.
\end{equation}
Then for $j=1, \ldots, J-1$, 
\begin{eqnarray*}
\prod_{l=j}^{J-1}\frac{\rho_{il}(\boldsymbol\theta_2)}{1-\rho_{il}(\boldsymbol\theta_2)} &=& \exp\left\{\sum_{l=j}^{J-1} \eta_{il}(\boldsymbol\theta_2) \right\}\\  
&=& \left\{
\begin{array}{cl}
\prod_{l=\sigma_2(j)}^{\sigma_2(J)-1} \frac{\rho_{il}(\boldsymbol\theta_1)}{1-\rho_{il}(\boldsymbol\theta_1)} & \mbox{ if } \sigma_2(j) < \sigma_2(J)\\
\frac{1}{\prod_{l=\sigma_2(J)}^{\sigma_2(j)-1} \frac{\rho_{il}(\boldsymbol\theta_1)}{1-\rho_{il}(\boldsymbol\theta_1)}} & \mbox{ if } \sigma_2(j) > \sigma_2(J)
\end{array}\right.
\end{eqnarray*}
According to (2.2) and (2.3), it can be verified that $\pi_{ij}(\boldsymbol\theta_2) = \pi_{i\sigma_2(j)}(\boldsymbol\theta_1)$ for all $i=1, \ldots, m$ and $j=1, \ldots, J$. That is, (2.4) holds given $\boldsymbol\theta_1$~.

On the other hand, \eqref{eq:adjacent_npo_sum_beta2j} implies an inverse transformation from $\boldsymbol\theta_2$ to $\boldsymbol\theta_1$
\begin{equation}\label{eq:adjacent_inverse_transformation}
\boldsymbol\beta_j = \sum_{l=\sigma_2^{-1}(j)}^{\sigma_2^{-1}(j+1)} \boldsymbol\beta_{2l}
\end{equation}
with $j=1, \ldots, J-1$. That is, it is one-to-one from $\boldsymbol\theta_1$ to $\boldsymbol\theta_2$~. According to Theorem~2.1, ${\rm id} \sim \sigma_2$~.

Similar as in the proof of Theorem~2.5, we have $\sigma_1 \sim \sigma_2$ for any two permutations $\sigma_1$ and $\sigma_2$~. 
%\hfill{$\Box$}
\end{proof}

\begin{proof} of Theorem~2.7: We frist verify condition~(2.4) for $\sigma_1={\rm id}$ and $\sigma_2=(J-1,J)$. In this case, given $\boldsymbol\theta_1 = (\boldsymbol\beta_1^T, \ldots, \boldsymbol\beta_{J-2}^T, \boldsymbol\beta_{J-1}^T)^T$ for $\sigma_1$, we let $\boldsymbol\theta_2 = (\boldsymbol\beta_1^T, \ldots, \boldsymbol\beta_{J-2}^T, -\boldsymbol\beta_{J-1}^T)^T$ for $\sigma_2$. Then $\eta_{ij}(\boldsymbol\theta_2) = {\mathbf h}_j^T({\mathbf x}_i) \boldsymbol\beta_j = \eta_{ij}(\boldsymbol\theta_1)$ for $j=1, \ldots, J-2$, and $\eta_{i,J-1}(\boldsymbol\theta_2) = -{\mathbf h}_{J-1}^T({\mathbf x}_i) \boldsymbol\beta_{J-1} = -\eta_{i,J-1}(\boldsymbol\theta_1)$. We further obtain $\rho_{ij}(\boldsymbol\theta_2) = \rho_{ij}(\boldsymbol\theta_1)$ for $j=1, \ldots, J-2$ and $\rho_{i,J-1}(\boldsymbol\theta_2) = 1- \rho_{i,J-1}(\boldsymbol\theta_1)$. According to (2.2) and (2.3), we obtain $\pi_{ij}(\boldsymbol\theta_2) = \pi_{ij}(\boldsymbol\theta_1)$ for $j=1, \ldots, J-2$; $\pi_{i,J-1}(\boldsymbol\theta_2) = \pi_{iJ}(\boldsymbol\theta_1)$; and $\pi_{iJ}(\boldsymbol\theta_2) = \pi_{i,J-1}(\boldsymbol\theta_1)$. That is, (2.4) holds given $\boldsymbol\theta_1$, which also holds given $\boldsymbol\theta_2$ since it is one-to-one from $\boldsymbol\theta_1$ to $\boldsymbol\theta_2$~. According to Theorem~2.1, ${\rm id} \sim (J-1,J)$ and thus $\sigma_1 \sim \sigma_1 (J-1,J)$ for any permutation $\sigma_1$~. %\hfill{$\Box$}
\end{proof}

\begin{proof} of Lemma~2:
It is well known that for each $i=1, \ldots, m$, $\left(\frac{Y_{i1}}{N_i}, \ldots, \frac{Y_{iJ}}{N_i}\right)$ maximizes $\sum_{j=1}^J Y_{ij} \log \pi_{ij}$ as a function of $(\pi_{i1}, \ldots, \pi_{iJ})$ under the constraints $\sum_{j=1}^J \pi_{ij} = 1$ and $\pi_{ij}\geq 0$, $j=1, \ldots, J$ (see, for example, Section~35.6 of \cite{johnson1997discrete}). If $\hat{\boldsymbol\theta} \in \boldsymbol\Theta$ and $\hat\sigma\in {\mathcal P}$ satisfy $\pi_{i\hat\sigma^{-1}(j)}(\hat{\boldsymbol\theta}) = \frac{Y_{ij}}{N_i}$ for all $i$ and $j$, then $\{\pi_{i\hat\sigma^{-1}(j)}(\hat{\boldsymbol\theta})\}_{ij}$ maximizes $\sum_{i=1}^m \sum_{j=1}^J Y_{ij} \log\pi_{i\sigma^{-1}(j)}(\boldsymbol\theta)$, which implies $(\hat{\boldsymbol\theta}, \hat\sigma)$ maximizes $l_N(\boldsymbol\theta, \sigma)$ and thus $l(\boldsymbol\theta, \sigma)$.
%\hfill{$\Box$}
\end{proof}

\begin{proof} of Lemma~3: 
According to the strong law of large numbers (see, for example, Chapter~4 in \cite{ferguson1996course}), $\frac{N_i}{N} = N^{-1} \sum_{l=1}^N {\mathbf 1}_{\{X_l={\mathbf x}_i\}} \rightarrow E({\mathbf 1}_{\{X_l={\mathbf x}_i\}}) = \frac{n_i}{n}$ almost surely, as $N\rightarrow \infty$, for each $i=1, \ldots, m$. Since $n_0\geq 1$, it can be verified that $\min\{N_1, \ldots, N_m\} \rightarrow \infty$ almost surely, as $N\rightarrow\infty$. Similarly, we have $\frac{Y_{ij}}{N_i} \rightarrow \pi_{i\sigma_0^{-1}(j)}(\boldsymbol\theta_0)$ almost surely, as $N_i\rightarrow \infty$, for each $i=1, \ldots, m$ and $j=1, \ldots, J$. Then as $N$ goes to infinity,
\begin{eqnarray*}
N^{-1}l_N(\boldsymbol\theta_0, \sigma_0) &=& \frac{1}{N}\sum_{i=1}^m \sum_{j=1}^J Y_{ij} \log\pi_{i\sigma_0^{-1}(j)}(\boldsymbol\theta_0)\\
&=& \sum_{i=1}^m \sum_{j=1}^J \frac{N_i}{N} \cdot \frac{Y_{ij}}{N_i} \log\pi_{i\sigma_0^{-1}(j)}(\boldsymbol\theta_0)\\
&\stackrel{a.s.}{\longrightarrow} & \sum_{i=1}^m \sum_{j=1}^J \frac{n_i}{n} \pi_{i\sigma_0^{-1}(j)}(\boldsymbol\theta_0) \log\pi_{i\sigma_0^{-1}(j)}(\boldsymbol\theta_0)\\
&=& \sum_{i=1}^m \sum_{j=1}^J \frac{n_i}{n} \pi_{ij}(\boldsymbol\theta_0) \log\pi_{ij}(\boldsymbol\theta_0)\> =\> l_0\> < \> 0
\end{eqnarray*}
%\hfill{$\Box$}
\end{proof}

\begin{proof} of Theorem~3.1:
First we claim that for large enough $N$, all $i=1, \ldots, m$ and $j=1, \ldots, J$, $0 > \log\pi_{i\hat\sigma_N^{-1}(j)} (\hat{\boldsymbol\theta}_N) \geq \frac{2n(l_0-1)}{n_0\pi_0}$, which is a finite constant. Actually, since $(\hat{\boldsymbol\theta}_N, \hat{\sigma}_N)$ is an MLE, we have $N^{-1} l_N(\hat{\boldsymbol\theta}_N, \hat{\sigma}_N) \geq N^{-1}l_N(\boldsymbol\theta_0, \sigma_0)$ for each $N$. According to Lemma~3, $N^{-1}l_N(\boldsymbol\theta_0, \sigma_0) \rightarrow l_0$ almost surely, then $N^{-1} l_N(\hat{\boldsymbol\theta}_N, \hat{\sigma}_N) > l_0 -1$ for large enough $N$ almost surely. On the other hand, since $\log \pi_{i\hat\sigma_N^{-1}(j)}(\hat{\boldsymbol\theta}_N) < 0$ for all $i$ and $j$, then 
\[
N^{-1} l_N(\hat{\boldsymbol\theta}_N, \hat{\sigma}_N) = \sum_{i=1}^m \sum_{j=1}^J \frac{N_i}{N}\cdot \frac{Y_{ij}}{N_i} \log \pi_{i\hat\sigma_N^{-1}(j)}(\hat{\boldsymbol\theta}_N) < \frac{N_i}{N}\cdot \frac{Y_{ij}}{N_i} \log \pi_{i\hat\sigma_N^{-1}(j)}(\hat{\boldsymbol\theta}_N)
\]
for each $i$ and $j$. Since $\frac{N_i}{N} \rightarrow \frac{n_i}{n}$ almost surely and $\frac{Y_{ij}}{N_i} \rightarrow \pi_{i\sigma_0^{-1}(j)}(\boldsymbol\theta_0)$ almost surely, then $\frac{N_i}{N}\cdot \frac{Y_{ij}}{N_i} \log \pi_{i\hat\sigma_N^{-1}(j)}(\hat{\boldsymbol\theta}_N) < \frac{1}{2}\cdot \frac{n_i}{n} \pi_{i\sigma_0^{-1}(j)}(\boldsymbol\theta_0) \log \pi_{i\hat\sigma_N^{-1}(j)}(\hat{\boldsymbol\theta}_N)$ for large enough $N$ almost surely. Then we have 
\[
0 > \frac{1}{2}\cdot \frac{n_i}{n} \pi_{i\sigma_0^{-1}(j)}(\boldsymbol\theta_0) \log \pi_{i\hat\sigma_N^{-1}(j)}(\hat{\boldsymbol\theta}_N) > l_0-1
\]
almost surely for large enough $N$ and each $i$ and $j$. Since $l_0-1 < 0$, we further have almost surely for large enough $N$,
\[
0 > \log \pi_{i\hat\sigma_N^{-1}(j)}(\hat{\boldsymbol\theta}_N) > \frac{l_0-1}{\frac{1}{2}\cdot \frac{n_i}{n} \pi_{i\sigma_0^{-1}(j)}(\boldsymbol\theta_0)} \geq \frac{l_0-1}{\frac{1}{2}\cdot \frac{n_0}{n} \pi_0} = \frac{2n(l_0-1)}{n_0\pi_0}
\]
Now we are ready to check the asymptotic difference between $N^{-1} l_N(\hat{\boldsymbol\theta}_N, \hat{\sigma}_N)$ and $N^{-1}l_N(\boldsymbol\theta_0, \sigma_0)$. According to Lemma~2 and its proof, $(\boldsymbol\theta_0, \sigma_0)$ maximizes $\sum_{i=1}^m \sum_{j=1}^J \frac{N_i}{N}$ $\pi_{i\sigma_0^{-1}(j)}(\boldsymbol\theta_0) \log \pi_{i\sigma^{-1}(j)}(\boldsymbol\theta)$. Then
\begin{eqnarray*}
0 &\leq & N^{-1}[l_N(\hat{\boldsymbol\theta}_N, \hat\sigma_N) - l_N(\boldsymbol\theta_0, \sigma_0)]\\
&=& \sum_{i=1}^m \sum_{j=1}^J \frac{N_i}{N} \pi_{i\sigma_0^{-1}(j)}(\boldsymbol\theta_0) \log \pi_{i\hat\sigma_N^{-1}(j)}(\hat{\boldsymbol\theta}_N)\\
& - &\sum_{i=1}^m \sum_{j=1}^J \frac{N_i}{N} \pi_{i\sigma_0^{-1}(j)}(\boldsymbol\theta_0) \log \pi_{i\sigma_0^{-1}(j)}(\boldsymbol\theta_0)\\
& + & \sum_{i=1}^m \sum_{j=1}^J \frac{N_i}{N} \left[\frac{Y_{ij}}{N_i} - \pi_{i\sigma_0^{-1}(j)}(\boldsymbol\theta_0)\right] \cdot \left[\log \pi_{i\hat\sigma_N^{-1}(j)}(\hat{\boldsymbol\theta}_N) - \log \pi_{i\sigma_0^{-1}(j)}(\boldsymbol\theta_0)\right]\\
& \leq & \sum_{i=1}^m \sum_{j=1}^J \frac{N_i}{N} \left[\frac{Y_{ij}}{N_i} - \pi_{i\sigma_0^{-1}(j)}(\boldsymbol\theta_0)\right] \cdot \left[\log \pi_{i\hat\sigma_N^{-1}(j)}(\hat{\boldsymbol\theta}_N) - \log \pi_{i\sigma_0^{-1}(j)}(\boldsymbol\theta_0)\right]
\end{eqnarray*}
Then for large enough $N$, we have almost surely
\begin{eqnarray*}
& & \frac{1}{N} |l_N(\hat{\boldsymbol\theta}_N, \hat\sigma_N) - l_N(\boldsymbol\theta_0, \sigma_0)|\\
&\leq & \sum_{i=1}^m \sum_{j=1}^J \left|\frac{Y_{ij}}{N_i} - \pi_{i\sigma_0^{-1}(j)}(\boldsymbol\theta_0)\right| \cdot \left[\frac{-2n(l_0-1)}{n_0\pi_0} -\log \pi_0\right]
\end{eqnarray*}
Since $\frac{Y_{ij}}{N_i} \rightarrow \pi_{i\sigma_0^{-1}(j)}(\boldsymbol\theta_0)$ almost surely for each $i$ and $j$, then $N^{-1}|l_N(\hat{\boldsymbol\theta}_N, \hat\sigma_N)$ $-$ $l_N(\boldsymbol\theta_0, \sigma_0)| $ $\rightarrow 0$ almost surely as $N$ goes to infinity. The rest parts of the theorem are straightforward.
%\hfill{$\Box$}
\end{proof}

\begin{proof} of Corollary~1:
Since ${\rm AIC} - {\rm AIC}(\boldsymbol\theta_0, \sigma_0) = {\rm BIC} - {\rm BIC}(\boldsymbol\theta_0, \sigma_0) = -2 l(\hat{\boldsymbol\theta}_N, \hat{\sigma}_N) + 2 l(\boldsymbol\theta_0, \sigma_0) = -2 l_N(\hat{\boldsymbol\theta}_N, \hat{\sigma}_N) + 2 l_N(\boldsymbol\theta_0, \sigma_0)$, the conclusion follows directly by Theorem~3.1.
\end{proof}

\end{document}